\documentclass[12pt,twoside]{article} 

\usepackage{orcidlink}
\usepackage[utf8]{inputenc}
\usepackage{amsmath}
\usepackage{amssymb}
\usepackage{amsthm}
\usepackage{mathrsfs}
\usepackage{dsfont}
\usepackage{booktabs} 
\usepackage{caption} 
\usepackage{subcaption} 
\usepackage{tikz}
\usetikzlibrary{er,positioning,bayesnet}
\usepackage{multicol}
\usepackage{hyperref}
\usepackage[inline]{enumitem} 
\usepackage{csquotes}
\usepackage{array}
\usepackage[verbose]{placeins}
\usepackage{comment}
\usepackage{xcolor}
\usepackage{mathtools}
\usepackage{titlesec}
\usepackage{makecell}

\titleformat{\subsection}
  {\normalfont\bfseries\itshape}  
  {\thesubsection}       
  {1em}                  
  {}

\usepackage{natbib}

\newtheorem{satz}{Theorem}[section]
\newtheorem{lemma}[satz]{Lemma}

\newtheorem{remark}[satz]{Remark}

\newtheorem{bsp}[satz]{Example}
\newtheorem{ann}{Assumption}

\DeclareMathOperator*{\argmin}{arg\,min}

\begin{document}

\title{\bf Local polynomial estimation of quantile density functions
}

\author{Niclas Jacobsen and Natalie Neumeyer\footnote{e-mail: natalie.neumeyer@uni-hamburg.de; ORCID: \orcidlinkf{0000-0002-6649-1135}}\\ University of Hamburg, Department of Mathematics}

\maketitle

\begin{abstract}
A new approach for nonparametric estimation of the quantile density function (sparsity function) and its derivatives is suggested which is based on local polynomial estimation. The estimator has more advantageous properties at the boundaries than classical quantile density estimators. Asymptotic normality is shown and the bias, asymptotic variance as well as boundary properties are compared with other estimators.  
\end{abstract}

AMS 2020 Classification: 
Primary  62G05  
Secondary 
62G30, 
		62G07,   
		62G20 

Keywords and Phrases: asymptotic normality, bias rates, boundary adaptation, empirical quantile function, nonparametric function estimator


\section{Introduction}

Let \( X \) be a real-valued absolutely continuous random variable with cumulative distribution function \( F \) and density function \( f \). The quantile function of \( X \) is defined as $Q(u)=F^{-1}(u)= \inf\{x\in \mathbb{R}:   F(x)\geq u\}$ for $u\in (0,1)$, and the quantile density function is the derivative
$$ q(u)=Q'(u)= \frac{1}{f(Q(u))}, \qquad u\in (0,1).$$
Estimation of the quantile density (also called sparsity function, see \cite{Tukey})  is important because it appears in the expression for the asymptotic variance of empirical and kernel type estimators of the quantile function, see e.g.\ \cite{Serfling}, p.\ 77, and \cite{Sheather-Marron}.  It is also important in survival analysis because the hazard quantile function can be written in terms of the quantile density function, see  \cite{Nair-Sankaran} or \cite{Sankaran-Nair}.  
Recently as an application in finance the investigation of price auctions via estimation of the quantile density function of bids has been considered by \cite{Zhang}, \cite{Shakhgildyan} and \cite{Doosti-etal}.
Also the quantile function appears in the definition of ROC-curves, and the quantile density function is the slope of the ROC-curve, which is an important feature in evaluating
diagnostic tests, see e.g.\ \cite{Pepe}. 
Further \cite{Redivo-etal} consider classification methods based on quantile density functions. 
\cite{Petersen-Mueller} consider log quantile density transformation. 
Early considerations of estimating densities evaluated in a quantile are by \cite{Bloch-Gastwirth}, \cite{Bofinger}, and \cite{Reiss}, among others. 
Estimators for the quantile density function based on kernel estimation methods  have been considered by \cite{parzen1979nonparametric}, \cite{falk1986estimation}, \cite{welsh}, \cite{Sheather-Marron}, \cite{jones1992estimating}, and \cite{soni2012nonparametric}. \cite{Zhou-Yip} considered kernel estimators for quantile functions based on truncated and censored data. Wavelet estimation methods have been considered by \cite{chesneau2016nonparametric}, and Poisson-based as well as Bernstein polynomial estimators by \cite{Chaubey-etal}. 
\cite{soni2012nonparametric} show that the kernel-based estimators do not perform well at the boundaries. 
\\
We consider a new estimator for the quantile density based on a local polynomial estimation procedure which is boundary adaptive. The local polynomial procedure  is a classic method  to estimate nonparametric regression functions and their derivatives, see \cite{Fan-Gijbels} for an extensive study, which extended methods by \cite{Stone1977}, \cite{Cleveland}, among others.  Local polynomial estimation has very good asymptotic properties, in particular concerning the covariate support boundary in comparison to other kernel estimation methods. The method can also be applied to estimate quantile regression functions and expectile regression functions, see for instance  \cite{ElGhouch-Genton}, and \cite{Adam-Gijbels}. Local polynomial estimation is still widely used and modified in current research, see for instance \cite{cattaneo2020simple}, \cite{Bouanani-Bouzebda}, \cite{Cattaneo-etal-2024},  \cite{Bertin-etal}, and \cite{Jirak-etal}.
\\
Consider an independent sample $X_1,\dots,X_n$ with the same distribution as $X$.
Our new idea to estimate the quantile density $Q'=q$ is based on the formula $$Q(u)=E[X_i\mid Z_i=u] \mbox{ with }Z_i=F(X_i).$$ Then applying the local polynomial regression estimator on the data  $( Z_i, X_i)$, $i=1,\dots,n$, by interpreting $ Z_i$ as covariates and $X_i$ as regression responses, would lead to an estimator for $Q(u)$. But those random variables $Z_i$ are unknown and need to be replaced by pseudo-data $\hat Z_i=F_n(X_i)$ with the empirical cdf $F_n$ of $X_1,\dots,X_n$.
Note that \cite{cattaneo2020simple}  estimated the density $F^\prime(x)=f(x)$ and derivatives based on the formula $F(x)=E[Z_i\mid X_i=x]$. Reverse we obtain local polynomial  estimators for the quantile function $Q$, the quantile density function $q$, and also for derivatives of $q$. Estimators for derivatives are also important, for example to estimate the score function $q'/q^2$ considered by \cite{parzen1979nonparametric} which is related to hazard rates   {and applied in the article \cite{Nair-etal} to classify lifetime models. The quantile function derivatives are also of interest to investigate the structure of the data generating distribution. Inflection points of the quantile function are those with $Q^{(2)}(u)=0$, which correspond to local maxima or minima (modes) of the density function. Higher derivatives of the quantile function also appear in \cite{Staudte} in order to characterize tail behavior of distributions. Further \cite{Prendergast-Staudte} introduce the quantile optimality ratio $q(u)/q''(u)$ in the context of choosing optimal bandwidths for quantile density estimators when constructing confidence intervals for quantile estimators.
The new estimator has the same simple structure
as local polynomial regression estimators}, and has much better properties at the boundaries than the classical kernel-based quantile density estimators in both cases of bounded and unbounded support of the distribution of $X$. Investigating the asymptotic properties is more challenging than for classical local polynomial procedures based on iid data because the $\hat Z_i$ are dependent pseudo-observations based on the empirical distribution function, or rewriting the estimator it is based on dependent order statistics.
\\
In section 2 we will define the estimators for the derivatives $Q^{(v)}$, $v=0,\dots,p$, based on the local polynomial of order $p$, and show the asymptotic normality result. In section 3 we consider in particular  estimating $q=Q^{(1)}$. We compare bias, asymptotic variance and boundary behaviour with different quantile density estimators from the literature. Further we discuss the case of unbounded support of the distribution of $X$. In section 4 we show simulation results. Section 5 concludes the paper, and proofs are presented in the appendix. 

\section{Definition of the Estimators and Main Result}
\label{Section: Definition of the estimators and main result}

 {As motivated in the introduction we write the quantile function as $Q(u)=E[X_i\mid F(X_i)=u]$, where $F$ is the cdf of $X_i$. Then we define the local polynomial estimator based on pseudo-data $({F}_n(X_i),X_i)$, where $ F_n$ is the empirical distribution function of $X_1,\dots,X_n$.}
 We consider local polynomial estimation of order $p\in\mathbb{N}$.  Further, let the kernel $K$ be a symmetric density with support $[-1,1]$, and the bandwidth $h=h_n$ a positive sequence with $h\to0$, $nh\to\infty$.  {We use the notation $A^\top$ for the transposed version of a matrix or vector $A$}, and further use the notations  $r_p(u) = (1,u,\dots,u^p)^\top $ and $x\wedge y=\min(x,y)$. We define 
\begin{eqnarray}
    \label{Gleichung: Definition von beta über arg min mit X_i}
    \hat{\beta}_p(u) &=& \argmin_{b\in\mathbb{R}^{p+1}} \sum_{i=1}^n \left(X_i - r_p(F_n(X_i) - u)^\top b\right)^2K\left(\frac{F_n(X_i)-u}{h}\right) \nonumber\\
       \label{Gleichung: Definition von beta über arg min mit i/n}
   &=& \argmin_{b\in\mathbb{R}^{p+1}} \sum_{i=1}^n \left(X_{(i)} - r_p\left(\frac{i}{n} - u\right)^\top b\right)^2K\left(\frac{\frac{i}{n}-u}{h}\right)
\end{eqnarray}
for $u\in [0,1]$, and where $X_{(1)}<\dots< X_{(n)}$ denote the order statistics of $X_1,\dots,X_n$ (strictly increasing with probability one). Note that the estimator has the form of a local polynomial regression estimator for equidistant design points $i/n$ and responses $X_{(i)}$. Classical results for asymptotics of local polynomial estimators cannot be directly applied due to the dependence of the order statistics. 
 For estimating the $v$-th derivative of $Q$ for $v\in\{0,\dots,p\}$ we use the notation $e_v$ for the $(p+1)$-dimensional standard vector with value 1 in the component $v+1$, and all other components are zero. Then the $v$-th derivative $Q^{(v)}(u)$ is estimated by 
\begin{eqnarray}
\label{Einführung: Gleichung: Defintion Schätzer}
   \hat{Q}^{(v)}(u)= \hat{Q}^{(v)}_p(u) = e^\top _vv!\hat{\beta}_p(u).
\end{eqnarray} 
We use the notation $\hat{Q}^{(v)}_p$ when we compare the estimators for different orders $p$ later, but otherwise we drop the $p$ in the notation. We are particularly interested in the estimator $\hat q=\hat q_p=\hat{Q}_p^{(1)}$ of the quantile density, but also higher derivatives are of interest, and we derive the asymptotic results for the general case. 

\begin{remark}\label{matrizendarstellung}\rm It holds that
\begin{eqnarray*}
           \hat \beta _p(u)  &=& \,(D_{ {p,u}}^\top K_{h,u}D_{ {p,u}})^{-1}D_{ {p,u}}^\top K_{h,u}X_{(\cdot)}
\end{eqnarray*}
with the matrix $D_{ {p,u}} = [(i/n-u)^j)]_{1\leq i\leq n,0\leq j\leq p}$, the diagonal matrix $K_{h,u}$ that has entries $(K_h(i/n-u))_{1\leq i\leq n}$ using the kernel notation $K_h(\cdot) = K(\cdot/h)/h$, and with $X_{(\cdot)}$ the ordered vector $(X_{(1)},\dots,X_{(n)})^\top $.  {All objects defined here also depend on $n$ but we choose not to add an additional $n$ in every subscript for easier readability. Note, that using this expression it is obvious that $\hat Q^{(v)}_p(u) = e_v^\top v!\hat \beta_p(u)$ is a L-statistic.} 
\end{remark}

\begin{remark}\rm  For the quantile density estimator with $v=1,p=1$ the formula
$$\hat q_1(u)=\frac{s_{n,0}(u)s_{n,1,X}(u)-s_{n,1}(u)s_{n,0, X}(u)}{s_{n,0}(u)s_{n,2}(u)-s_{n,1}(u)^2}$$
holds (analogous to section 2.2 in \cite{Fan-Gijbels}) with $s_{n,j}(u) = \sum_{i=1}^n K_h(i/n-u)(i/n-u)^j$ and $s_{n,j,X}(u) = \sum_{i=1}^n K_h(i/n-u)(i/n-u)^jX_{(i)}$. 
Note that $s_{n,0}(u)s_{n,2}(u)-s_{n,1}(u)^2>0$ by Cauchy Schwarz inequality. Further one can rewrite
\begin{eqnarray*}
  &&  s_{n,0}(u)s_{n,1,X}(u)-s_{n,1}(u)s_{n,0,X}(u)\\
  &=& 
   \sum_{j=1}^n\sum_{i=1}^{j-1} K_h(i/n-u)K_h(j/n-u)\frac{j-i}{n}(X_{(j)}-X_{(i)})\; \geq 0,
\end{eqnarray*}
and one obtains non-negativity of the quantile density estimator. For larger $p$ the estimator can have negative values, similarly as kernel density estimators with higher order kernels. 
\end{remark}

\begin{remark}\label{Lemma: Schätzer gleich für p und p+1}\rm For interior design points $u=\frac{k}{n}\in[h,1-h]$ (for $k\in\{1,\dots,n\}$) one can show identity of the estimators $\hat  Q_p^{(v)}(u)=\hat Q_{p+1}^{(v)}(u)$ if $p-v$ is even. In particular for the quantile density estimation ($v=1$) it holds that $\hat q_p(u)=\hat q_{p+1}(u)$ for odd $p$. 
\end{remark}

In the following we consider estimation in interior points and boundary points. With interior points it is meant that  $u\in (0,1)$ is fixed. Then for $n$ large enough one obtains  $u\in [h,1-h]$. With  boundary points we mean sequences $u=u_n=ch\in [0,h)$ or $u=u_n=1-ch\in(1-h,1]$ for $c\in [0,1)$. The asymptotic results in the following theorem can also be derived for interior sequences $u=u_n\in [h,1-h]$ converging to some value in $(0,1)$ under some adapted assumptions. 
 
To define the quantile function also in 0 and 1 we use $Q(u)=F^{-1}(u)= \inf\{x\in \mathbb{R}; \  F(x)\geq u\}$ for $u\in (0,1]$, and $Q(0)=Q(0+)$, $q(0)=Q'(0+)$, $q(1)=Q'(1-)$. Here and in the following we use the notation $g(u+)=\lim_{t\searrow u}g(t)$ for right limits and $g(u-)=\lim_{t\nearrow u}g(t)$ for left limits.

For the asymptotic results we need the following assumptions. 

\begin{ann}\rm
    \label{Annahme an den Kern}
    Let $K$ be a bounded, symmetric Lipschitz-continuous density with support $[-1,1]$. 
\end{ann}

\begin{ann}\rm
    \label{Annahme an die Quantilsfunktion}
    For some $\delta>0$ let the quantile function $Q$ be $(p+1)$-times continuously differentiable in $[u-\delta,u+\delta]$ in the interior case, and in $[0,\delta]$ in the lower boundary case, and $[1-\delta,1] $ in the upper boundary case. Let $q(u)>0$ in the interior case, $q(0+)>0$ in the lower boundary case and $q(1-)>0$ in the upper boundary case. 
\end{ann}

 Note that for the boundary cases we assume that $Q^{(v)}(0+)\in \mathbb{R}$ and $Q^{(v)}(1-)\in\mathbb{R}$, respectively, for $v\in\{0,\dots,p\}$, which in particular means that the lower or upper bound of the support of $X$ is finite.   We consider the case of unbounded support in section 4. 

\medskip

\begin{ann}\rm
    \label{Annahme: Konvergenzen der Bandbreite}
    Assume that $h \rightarrow 0$ and $nh^{2p+1} = O(1)$ and 
   $       \displaystyle \frac{nh^2}{\log(n)^2\log\log(n)} \rightarrow \infty.$
\end{ann}

\medskip

 {
\begin{satz}
    \label{Satz: Konvergenz der Schätzer}
    Let assumptions \ref{Annahme an den Kern}, \ref{Annahme an die Quantilsfunktion} and \ref{Annahme: Konvergenzen der Bandbreite} hold.
  Define 
     \begin{eqnarray*}
                 \Gamma_{p} &=&  \int_I \int_I (x \wedge t)r_p(x)r_p(t)^\top K(x)K(t)\, dxdt \\
            c_{p} &=& \int_I r_p(x)x^{p+1}K(x)\, dx\\
             S_{p} &=& (\mu_{i+l})_{\substack{\scriptstyle i=0,\dots,p\\ \scriptstyle l = 0,\dots,p}} \mbox{ with } \mu_j = \int_I x^jK(x)\, dx.
    \end{eqnarray*}
Here the interval $I$ depends on the interior and boundary cases, 
$$I=\begin{cases} [-1,1] , & \text{fixed } u\in (0,1)\\ [-c,1], & u=ch, c\in [0,1)
\\ [-1,c], & u=1-ch, c\in [0,1).\end{cases}$$
Further define
$$ \mathcal{B}_{p,v}(u) = v!e_v^\top S_{p}^{-1}c_{p} \frac{Q^{(p+1)}(u)}{(p+1)!}$$
 for $v\in\{0,\dots,p\}$.
 Then for interior and boundary points $u$ it holds that
    \begin{eqnarray*}
        \frac{\sqrt{nh^{2v-1}}}{q(u)}\left(\hat{Q}_p^{(v)}(u)-Q^{(v)}(u)-h^{p+1-v}\mathcal{B}_{p,v}(u)\right) \stackrel{d}{\longrightarrow} \mathcal{N}(0,\mathcal{V}_{p,v})
    \end{eqnarray*}
    for $v\in\{1,\dots,p\}$ with asymptotic variance
    $$   \mathcal{V}_{p,v} = 
            (v!)^2e_v^\top S_{p}^{-1}\Gamma_{p}S_{p}^{-1}e_v.$$
For $v=0$ and interior  $u\in(0,1)$ it holds that 
    \begin{eqnarray*}
        \frac{\sqrt{n}}{q(u)(u-u^2)^{1/2}}\left(\hat{Q}^{(0)}_p(u)-Q(u)
        \right)  \stackrel{d}{\longrightarrow} \mathcal{N}(0,1).
    \end{eqnarray*}
    And for $v=0$ and boundary $u=ch$ or $u=1-ch$ it holds that
        \begin{eqnarray*}
        \sqrt{\frac{n}{h}}\frac{1}{q(u)}\left(\hat{Q}^{(0)}_p(u)-Q(u)-h^{p+1}\mathcal{B}_{p,0}(u)\right)  \stackrel{d}{\longrightarrow} \mathcal{N}(0,\mathcal{V}_{p,0})
    \end{eqnarray*}
    with asymptotic variance
    $$      \mathcal{V}_{p,0}=\left(e_0^\top S_{p}^{-1}\Gamma_{p}S_{p}^{-1}e_0 + c\right).$$
\end{satz}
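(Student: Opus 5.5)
Using the matrix representation in Remark \ref{matrizendarstellung}, write $\hat\beta_p(u)-\beta_p(u)$ with $\beta_p(u)=(Q(u),Q'(u),\dots,Q^{(p)}(u)/p!)^\top$ as $(D_{p,u}^\top K_{h,u}D_{p,u})^{-1}D_{p,u}^\top K_{h,u}\big(X_{(\cdot)}-D_{p,u}\beta_p(u)\big)$. The key device is to represent the order statistics through uniform order statistics: $X_{(i)}=Q(U_{(i)})$ with $U_{(i)}$ the order statistics of an iid $U[0,1]$ sample. We then split
\[
X_{(i)}-r_p(i/n-u)^\top\beta_p(u)=\big[Q(U_{(i)})-Q(i/n)\big]+\big[Q(i/n)-r_p(i/n-u)^\top\beta_p(u)\big].
\]
The second bracket is deterministic. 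A Taylor expansion of order $p+1$ under Assumption \ref{Annahme an die Quantilsfunktion}, combined with Riemann sum approximations $n^{-1}\sum_i K_h(i/n-u)((i/n-u)/h)^{j}\to\mu_j$ over $I$ (using Lipschitz continuity of $K$ from Assumption \ref{Annahme an den Kern}, with error $O(1/(nh))$), gives the bias $h^{p+1-v}\mathcal B_{p,v}(u)(1+o(1))$. Precisely, we rescale by $H=\mathrm{diag}(1,h,\dots,h^p)$ and note that $n^{-1}H^{-1}D^\top K D H^{-1}\to S_p$. The bias remainder is $o(h^{p+1-v})$, and this is negligible after multiplication by $\sqrt{nh^{2v-1}}$ because $nh^{2p+1}=O(1)$.

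For the stochastic term, linearize $Q(U_{(i)})-Q(i/n)=q(i/n)(U_{(i)}-i/n)+R_i$, with $|R_i|\lesssim (U_{(i)}-i/n)^2$ for indices with $i/n$ within $h$ of $u$. Write $U_{(i)}-i/n=-(G_n(i/n)-i/n)+$ a Bahadur–Kiefer remainder, where $G_n$ is the uniform empirical cdf. The Kiefer bound on this remainder is $O((n^{-3/4}(\log n)^{1/2}(\log\log n)^{1/4})$ uniformly. Together with $\sup|U_{(i)}-i/n|=O_p(\sqrt{\log\log n/n})$, this is where Assumption \ref{Annahme: Konvergenzen der Bandbreite} enters. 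One has to check that after weighting by $\sqrt{nh^{2v-1}}\,h^{-v}$ and averaging over the $\asymp nh$ kernel-supported indices, these remainders vanish; I expect $nh^2/(\log n^2\log\log n)\to\infty$ is exactly what makes $\sqrt{n/h}\cdot(\log\log n/n+\text{Kiefer term})\to0$. Also replace $q(i/n)$ by $q(u)$ at cost $O(h)$.

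The main term is then $-q(u)\,e_v^\top v!\,h^{-v}S_p^{-1}\,n^{-1}\sum_i K_h(i/n-u)r_p((i/n-u)/h)\,(G_n(i/n)-i/n)$. Here it is convenient to center: for $v\ge1$, the polynomial moment structure allows subtracting $G_n(u)-u$, because $e_v^\top S_p^{-1}$ applied to the vector of moments $\int r_p K$ vanishes for $v\geq 1$ (that vector is $S_pe_0$). We are therefore left with the increments $G_n(u+hx)-G_n(u)-hx$, approximated by a Riemann integral. Since $\sqrt{n/h}\,(G_n(u+hx)-G_n(u)-hx)$ converges as a process in $x\in I$ to a two-sided Brownian motion $W(x)$ with $\mathrm{Cov}=x\wedge t$ on the same side (local empirical process; or directly a Lindeberg CLT for the linear functional, which is a sum of iid terms), we get a normal limit with variance $(v!)^2e_v^\top S_p^{-1}\Gamma_pS_p^{-1}e_v$. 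Checking the normalization $\sqrt{nh^{2v-1}}\cdot h^{-v}\cdot\sqrt{h/n}=1$ confirms the rate.

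For $v=0$ no centering is possible. In the interior, the term $G_n(u)-u$ dominates at rate $n^{-1/2}$ with variance $u(1-u)$, and bias and increments are negligible, giving the stated result. At the boundary $u=ch$, $G_n(u)-u$ has variance $\approx ch/n$, which is of the same order as the increments. Represent $G_n(u+hx)-u-hx=G_n(0)\dots$, i.e.\ anchor the Brownian motion at the boundary point $-c$ (where $G_n(0)=0$). The limit is then $W(x+c)$-type with covariance $(x\wedge t)+c$. Combined with $e_0^\top S_p^{-1}S_pe_0=1$, this yields the extra $+c$ term in $\mathcal V_{p,0}$; the upper boundary is symmetric. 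The main obstacle I anticipate is the uniform control of the Bahadur-type and quadratic remainders near the boundary under only the stated bandwidth condition.
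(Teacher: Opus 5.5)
Your proposal is correct, but it takes a different route from the paper.

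\textbf{The paper's route.} The paper replaces the discrete sum by the integral $\hat C_n(u)=\int_0^1 r_p(\frac{t-u}{h})(Q_n(t)-Q(t))K_h(t-u)\,dt$ and does the Taylor and Bahadur--Kiefer steps inside it. Near the boundary it controls the quadratic remainder with a weighted O'Reilly-type bound and splits the range at $1/(n+1)$. It then computes the full covariance matrix $\Sigma_{p,u}$, including the $(u-u^2)e_0e_0^\top$, $c\,e_0e_0^\top$ and $-(e_1e_0^\top+e_0e_1^\top)$ pieces, and reads off the $(v,v)$ entry. Finally it needs Lemma~\ref{Lemma: Lemma 4} to show that the sum-minus-integral term $\hat A_n(u)$ is negligible.

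\textbf{Your route.} You stay with the L-statistic itself, which buys three simplifications.
\begin{enumerate}
\item The unweighted bound $\sup_i|U_{(i)}-i/n|=O_P(n^{-1/2})$ already handles the quadratic remainder up to the boundary.
\item The linear term $n^{-1}\sum_i w_i(G_n(i/n)-i/n)$ is an average of bounded iid summands, so Lindeberg applies directly. The sum-to-integral passage then becomes a deterministic double Riemann sum in the variance, and Lemma~\ref{Lemma: Lemma 4} is not needed.
\item Exact reproduction of constants, $e_v^\top M_n^{-1}M_ne_0=0$ for $v\ge1$ with $M_n=n^{-1}D_{p,h,u}^\top K_{h,u}D_{p,h,u}$, removes $G_n(u)-u$ up front. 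This shows structurally why $u(1-u)$ and $c$ appear only for $v=0$.
\end{enumerate}
Your bandwidth bookkeeping for the Kiefer term is exactly right.

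\textbf{The step you state too quickly.} Identifying the limit variance with $\Gamma_p$ needs one more line. The local increment process converges to a two-sided Brownian motion with covariance $x\wedge t-(x\wedge 0)-(t\wedge 0)$. On the negative side this is $|x|\wedge|t|$, not $x\wedge t$. At the upper boundary, the anchored process for $v=0$ has covariance $c-(x\vee t)=c+x\wedge t-x-t$. In both cases the discrepancy is of the form $g(x)+g(t)$, which contributes $m(S_pe_0)^\top+(S_pe_0)m^\top$ with $m=\int_I r_p(x)K(x)g(x)\,dx$.
\begin{itemize}
\item For every $v\ge1$ this term is annihilated by $e_v^\top S_p^{-1}(\cdot)S_p^{-1}e_v$.
\item For $v=0$ at the upper boundary, $g(x)=-x$ gives $m=-S_pe_1$, hence $e_0^\top S_p^{-1}m=0$. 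This is precisely the $-(e_1e_0^\top+e_0e_1^\top)$ term in the paper's $\Sigma_{p,u}$.
\end{itemize}

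\textbf{Trade-off.} The paper's route gives the complete joint covariance matrix of $\hat\beta_p(u)$ in one computation, which is what Remark~\ref{Bemerkung: Multivariate statement} uses. Yours is shorter and more elementary. The joint statement would still follow by Cram\'er--Wold applied to the same iid sum.
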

}

\bigskip

The proof is given in the appendix.   In the  boundary cases $u=ch$ and $u=1-ch$, the terms  $q(u)$, $Q^{(p+1)}(u)$  in the results can be replaced by the right limits $q(0+)$, $Q^{(p+1)}(0+)$ in the lower case, and the left limits $q(1-)$, $Q^{(p+1)}(1-)$ in the upper case. The asymptotic variance does not depend on $u$ for $v\in\{1,\dots,p\}$.   {In the interior case for $v=0$  one obtains the same asymptotic result as for the empirical quantile function $Q_n=F_n^{-1}$,  see e.g.\ \cite{Serfling}, p.\ 77.} 

The estimators $\hat Q^{(v)}_p(u)$ are boundary adaptive as the classical local polynomial estimators by \cite{Fan-Gijbels}, and the bias rates and variance rates are also analogous to the density estimation result by \cite{cattaneo2020simple}, see Theorem 1 in their supplementary material. 
   
    From the proof it follows that the leading bias of $\hat{Q}_p^{(v)}(u)$ is given by
    \begin{eqnarray*}
        h^{p+1-v}\left(\frac{Q^{(p+1)}(u)}{(p+1)!}v!e_v^\top S_{p}^{-1}c_{p} + h\frac{Q^{(p+2)}(u)}{(p+2)!}v!e_v^\top S_{p}^{-1}\Tilde{c}_{p}\right)
    \end{eqnarray*}
    if $Q$ is $(p+2)$-times continuously differentiable in the area around $u$ (similar to the proof of Lemma \ref{Lemma: Lemma 2} in the appendix). Here the first term vanishes if $p-v$ is even and $u$ is interior, and 
    \begin{eqnarray*}
        \Tilde{c}_{p} = \int_I r_p(x)x^{p+2}K(x)\, dx.
    \end{eqnarray*}
    In that case the bandwidth assumption 3 can be relaxed. 
    For the interior case if $p-v$ is even the asymptotic bias using polynomial order $p$ is the same as when using polynomial order $p+1$ because then $e_v^\top S_{p+1}^{-1}c_{p+1} = e_v^\top S_{p}^{-1}\Tilde{c}_{p}$. Analogously this holds for the asymptotic variances. 
    This result is not surprising, since we know from Remark \ref{Lemma: Schätzer gleich für p und p+1}, that for even $p-v$ the estimators $Q_p^{(v)}(u)$ and $Q_{p+1}^{(v)}(u)$ are identical for the interior design points $u = k/n$. 
     {For instance for $v=1$ to estimate the quantile density $q$ in the interior case the bias has the same rate $h^2$ for $p=1$ and $p=2$, but at the boundary the bias rate is $h$ for $p=1$ and $h^2$ for $p=2$. Therefore $p=2$ has advantages over $p=1$. This corresponds to the recommendation by \cite{Fan-Gijbels} in their section 3.2.2 to use $p-v$ odd. However, in the local polynomial regression estimation the bias term in the case $p-v$ even is more complex than for $p-v$ odd, because it depends not only on the regression derivatives, but also on the density derivatives. This is simpler here because only derivatives of $Q$ appear in both cases.}
The following table summarizes the bias rates. 
\small
\begin{eqnarray*}
  \begin{array}{|c|c|c|}
\hline
\text{bias rate} & p - v \text{ odd} & p - v \text{ even} \\
\hline
u \text{ interior} & h^{p+1-v} & h^{p+2-v} \\
\hline
u \text{ boundary} & h^{p+1-v} & h^{p+1-v} \\
\hline
\end{array}  
\end{eqnarray*}
\normalsize
 {
For an optimal bandwidth one minimizes the dominating term of the mean squared error (MSE). The derivative $Q^{(p+1)}(u)$ appears in the optimal bandwidth formula and can be replaced by an estimator $\hat Q^{(p+1)}(u)$ for a plug-in method.
The following table shows the MSE rates for $v\geq 1$. } 
\small
\begin{eqnarray*}
  \begin{array}{|c|c|c|}
\hline
\text{MSE rate} & p - v \text{ odd} & p - v \text{ even} \\
\hline
u \text{ interior} & n^{-(2v-1)/(2p+1)} & n^{-(2v-1)/(2p+3)} \\
\hline
u \text{ boundary} &n^{-(2v-1)/(2p+1)} & n^{-(2v-1)/(2p+1)} \\
\hline
\end{array}  
\end{eqnarray*}
\normalsize
 Apart from the bias  the only unknown term in the asymptotics is $q(u)$, consistently estimated by $\hat q(u)$. For example, consider an interior $u$, even $p-v$, $v\in\{1,\dots,p\}$, and $nh^{2p+3}\to 0$, then the bias term is negligible, and an asymptotic $(1-\alpha)$-confidence interval for $Q^{(v)}(u)$ is given by
$$ \hat Q_{p}^{(v)}(u)\pm \frac{|\hat q(u)|\mathcal{V}_{p,v}^{1/2}}{\sqrt{nh^{2v-1}}} \Phi^{-1}\left(1-\textstyle{\frac{\alpha}{2}}\right),$$ 
where $\Phi$ denotes the standard normal cdf.
As another example to apply Theorem \ref{Satz: Konvergenz der Schätzer} we consider estimating the \cite{parzen1979nonparametric} hazard score function $s(u)=q'(u)/(q(u))^2$ by $\hat s(u)=\hat Q_{p}^{(2)}(u)/(\hat Q_p^{(1)}(u))^2$ and obtain in the interior case 
$$\sqrt{n h^3} \left(\hat s(u)-s(u)-h^{p-1}\frac{B_{p,2}(u)}{q^{2}(u)}\right)\stackrel{d}{\longrightarrow} \mathcal{N}(0,e_2^\top [p]!V_{p,u}[p]!e_2/(q(u))^2).$$
For example in the case $p=2$ the asymptotic variance is equal to 
$4(t_{0,0}\mu_2^2-2t_{0,2}\mu_2+t_{2,2})/(q(u)$ $(\mu_4-\mu_2^2))^2$ with the notation $\mu_j$ from Theorem \ref{Satz: Konvergenz der Schätzer} and 
$t_{i,j}=\int_{-1}^1\int_{-1}^1 (x\wedge t)x^it^j K(x)K(t)\, dx dt$. 

\section{Discussion of Quantile Density Estimation}
	\label{Abschnitt: Comparison to other estimators}

In this section we investigate $\hat q=\hat Q^{(1)}$ as estimator for the quantile density function $q$. First we consider the case of a bounded support of $X$.
We compare the local polynomial quantile density estimator with classical kernel-based estimators, in the interior and boundary case. This shows that the new estimator has much better boundary properties. We also consider the case of unbounded support, which as far as we know has not been considered for the classical quantile density estimators in the literature. 

\subsection{Bounded Support}\label{subsec-boundedsupport}

From Theorem \ref{Satz: Konvergenz der Schätzer} we obtain for the quantile density estimator $\hat q(u)=\hat q_p(u)$ of order $p$
 \begin{eqnarray*}
        \sqrt{nh}\left(\hat{q}(u)-q(u)-h^{p}\mathcal{B}_{p,1}(u)\right) \stackrel{d}{\longrightarrow} \mathcal{N}(0,q(u)^2\mathcal{V}_{p,1}),
    \end{eqnarray*}
    in the interior case with $ \mathcal{V}_{p,1}= 
            e_1^\top S_{p}^{-1}\Gamma_{p}S_{p}^{-1}e_1 $. The same holds in the boundary case when replacing $q(u)^2$ by $q(0+)^2$ in the lower case and $q(1-)^2$ in the upper case. 
    
   For  an interior point $u$,  $\mathcal{B}_{p,1}(u)=0$ if $p$ is odd.     
    In the cases $p=1$ and $p=2$ we obtain the same bias term $h^2q^{\prime\prime}(u)\mu_4/(6\mu_2)$, and the asymptotic variance has the form $q(u)^2\int_{-1}^1\int_{-1}^1 (x\wedge t)xt K(x)K(t)\, dx dt/\mu_2^2$ (with $\mathcal{V}_{1,1}=\mathcal{V}_{2,1}$). 
  
  \medskip
 
An alternative approach for estimating $q$ is the following estimator, see \cite{parzen1979nonparametric}, \cite{falk1986estimation}, and \cite{jones1992estimating}, 
\begin{eqnarray*}
    \Tilde{q}_1(u) = \int_0^1 Q_n(x)\frac{1}{h^2}k\left(\frac{u-x}{h}\right)dx = \sum_{i=1}^n X_{(i)} \left(K_h\left(u-\frac{i-1}{n}\right)-K_h\left(u-\frac{i}{n}\right)\right),
\end{eqnarray*}
where we use the notation $\frac{d}{dx}K(x) = k(x)$, and $Q_n=F_n^{-1}$ is the quantile function based on the empirical distribution function. 
Theorem 2 of \cite{falk1986estimation} shows pointwise asymptotic normality for interior points of this estimate. 
    Let $u\in (0,1)$ and suppose that $Q$ is twice differentiable near $u$ with bounded second derivative. Then if $k$ has support $[-1,1]$ with $\int_{-1}^1 k(x)dx = 0$, and $h\rightarrow 0$ while $nh^2\rightarrow \infty$ it holds that
    \begin{eqnarray*}
        \sqrt{nh} \left( \Tilde{q}_1(u) - Tq(u) \right) \stackrel{d}{\longrightarrow} \mathcal{N}\left(0,q(u)^2\int K(x)^2dx\right),
    \end{eqnarray*}
    where $Tq(u) = \int_0^1 Q(x)k((u-x)/h)/h^2dx $. 
Assume a symmetric kernel $K$ with $K(1)=0$, then $\int xk(x)dx = -1$ and $\int_{-1}^1 x^2k(x)dx=0$, from which  one obtains by Taylor's expansion for $n$ large enough,
\begin{eqnarray}\label{Taylor-Falk}
    Tq(u) 
    &=& \int_{(u-1)/h}^{u/h}Q(u-hx)h^{-1}k(x)\, dx \;=\; \int_{-1}^1  Q(u-hx)h^{-1}k(x)\, dx \\
    &=& q(u) - h^2\frac{q^{\prime \prime}(u)}{6}\int_{-1}^1 x^3k(x)\, dx + o(h^2).\nonumber
\end{eqnarray}
Therefore the asymptotic bias has the form 
\begin{eqnarray*}
    E[\Tilde{q}_1(u)-q(u)] \approx -\frac{h^2}{6}q''(u)\int_{-1}^1 x^3k(x)\, dx=\frac{h^2}{2}q''(u)\mu_2
\end{eqnarray*}
by partial integration. 
 {By employing higher order kernels, the order of the
leading bias can be reduced to a rate $h^s$ for some $s> 2$. 
In the case considered here the only terms that depend on $u$ are the factor $q''(u)$  in the bias and  the factor $q(u)^2$  in the asymptotic variance, which are the same as for the new estimator $\hat q(u)$. Thus to compare the terms one only has to consider the different factors not depending on $u$.} For example for the Epanechnikov kernel $\hat q(u)$ has a smaller bias term, but a larger asymptotic variance than $\tilde q_1(u)$. 


For the boundary case $u = ch$ we have to adapt the integral limits in (\ref{Taylor-Falk}) for the classical kernel estimator  $\tilde q_1$. One cannot approximate $q(u)$ by $Tq(u)$ anymore, as $\int_{-c}^1 k(x) \neq 0$ and $\int_{-c}^1 xk(x) \neq -1$. In particular the leading term of $Tq(u)-q(u)$ is (if we assume right-continuity and boundedness of $Q$ at zero)
\begin{eqnarray*}
    \left|Q(0)h^{-1} \int_{-c}^1 k(x)dx\right| \rightarrow \infty 
\end{eqnarray*}
(if $Q(0) \neq 0$). Therefore the estimator $\tilde q_1$ does not work in the boundary case, whereas the new local polynomial estimator $\hat q$ has very good boundary properties.

\medskip

Another estimator considered by \cite{jones1992estimating} is a plug-in estimator 
\begin{eqnarray*}
   {\tilde q}_2 (u) = \frac{1}{f_n(Q_n(u))}
\end{eqnarray*}
for  $q(u) = 1/f(Q(u))$ with a kernel density estimator  $f_n(x) = (nh)^{-1}\sum_{i=1}^n K((X_i-x)/h)$ for $f(x)$, and the empirical quantile function $Q_n$. 
For fixed $u\in (0,1)$  the asymptotic bias is
$$-\frac{h^2}{2}\frac{q(u)q''(u)-3(q'(u))^2}{q^3(u)}\mu_2,$$
and the asymptotic variance  $q^3(u)\int K^2(x)\, dx$.
Due to the different structures it is not easy to compare the bias and asymptotic variance with those of the new estimator $\hat q$.  \cite{jones1992estimating} compares $\tilde q_1(u)$ and $\tilde q_2(u)$ using a bandwidth for the kernel density estimator depending on $q(u)$, but not one of the estimators is generally better than the other. 

Note  in the expansion of $\tilde q_2(u)-q(u)$ the term $-(f_n(Q(u))-f(Q(u)))/{f(Q(u))^2}$ dominates, see \cite{soni2012nonparametric}.

Now we consider again the boundary cases. The kernel density estimator $ f_n$ evaluated in $Q(u)$ has boundary problems if $Q(u)$ is close to the boundary of the support of density $f$, say $[x_L,x_U]$. For comparison with the boundary case of the new estimator $\hat q$, 
\\
consider a Taylor approximation of $Q(u)$ for $u=ch$, i.e.
\begin{eqnarray*}
    Q(ch) = Q(0) + chq(0) + O(h^2) = x_L + \frac{c}{f(x_L)}h + O(h^2).
\end{eqnarray*}
This implies that the estimator $\tilde q_2$ has boundary problems if $u=ch$ and $c<f(x_L)$ because then $Q(u)$ is in the boundary of the kernel estimator.  However, if $c \geq f(x_L)$ we have $Q(u)$ basically in the interior of the kernel density estimator, as $Q(ch)\gtrsim x_L + h$. Therefore the bias will be of rate $ O(h^2)$.  

\cite{soni2012nonparametric} also consider a smoothed version of $\tilde q_2$, say $\tilde q_3$, and show asymptotic normality. But they show that all the three estimators $\tilde q_1$, $\tilde q_2$ and $\tilde q_3$ do not perform well at the boundaries.

\subsection{Unbounded Support} 

If $X$ has unbounded support (or the density $f$ is zero in a support boundary) it holds that $q(u)\to\infty$ for $u\to 0$ and/or $u\to 1$, and the same holds for other derivatives $Q^{(v)}(u)$.  In the following we present a consistency result for the unbounded case. To be precise we show under some assumptions, that
\begin{eqnarray}\label{unbounded-result}
    \frac{\hat Q_p^{(v)}(u)}{Q^{(v)}(u)}\stackrel{P}{\longrightarrow} 1,
\end{eqnarray}
where $u=u_n$ may depend on the sample size, and in particular the edge cases $u=u_n\to 0$ and $u=u_n\to 1$ for $n\to\infty$ are of interest. 
\begin{ann}
	\label{Annahme für Csörgo 1978 Theorem 3}
\rm	 
Assume that $F$ is twice differentiable on $(x_L,x_U)$, where $-\infty \leq x_L = \sup\{x:F(x)=0\} < \inf\{x:F(x)=1\} = x_U \leq \infty$ and $f \neq 0$ on $(x_L,x_U)$.
	Assume that for some $0<\gamma < \infty$ it holds that 
	\begin{eqnarray*}
		\sup_{0<t<1} t(1-t)\frac{|f'(Q(t))|}{f^2(Q(t))} \leq \gamma.
	\end{eqnarray*}
	Further assume, that if $f(x_L+)=0$ ($f(x_U-)=0$), then $f$ is nondecreasing (nonincreasing) on an interval to the right of $x_L$ (left of $x_U$).
\end{ann}
This assumption is according to apply Theorem 3 of \cite{csorgHo1978strong} in the proof of the following theorem, with slight modifications to the assumptions found in \cite{csorgHo1982general}. 
Further define 
\begin{eqnarray}\nonumber
    \bar{I}_n(u) &:=& \{i \in\{1,\dots,n\}: K_h(i/n-u)\neq 0\}\,,\,\,\,i_L^n(u) := \min \bar{I}_n(u)\,\,,\,\,\, i_U^n(u) := \max \bar{I}_n(u) \\
    I_n(u) &:=& \left[\frac{i_L^n(u)}{n},\frac{i_U^n(u)}{n}\right].\label{Inu}
\end{eqnarray}

\begin{ann}
    \label{Annahme für unbounded support}
\rm  Assume that $i_U^n(u) < n$ for $n$ large enough, and assume, that $Q$ is $(p+1)$-times continuously differentiable in a neighborhood of $u$, where we mean for the cases that $u \rightarrow 0$ or $u \rightarrow 1$ an interval $[0,\delta]$ and $[1-\delta,1]$,  respectively. Further assume, that as $nh^{2v}\rightarrow \infty$ it holds 
    \begin{eqnarray*}
        \frac{\sup_{t \in I_n(u)}|q(t)|}{Q^{(v)}(u)} = o(h^vn^{1/2})
    , \quad   \frac{h^{p+1}\sup_{t\in I_n(u)}|Q^{(p+1)}(t)|}{Q^{(v)}(u)} = o(h^v).
    \end{eqnarray*}
\end{ann}

\begin{satz}
	\label{Satz: Konsistenz im unbegrenzten Randfall, modifizierte Annahmen}
	Under assumption \ref{Annahme an den Kern}, \ref{Annahme für Csörgo 1978 Theorem 3} and \ref{Annahme für unbounded support}, the consistency
    (\ref{unbounded-result}) holds. 
 \end{satz}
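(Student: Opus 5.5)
We split the estimation error into a deterministic approximation part and a stochastic part, using the linear L-statistic representation of Remark \ref{matrizendarstellung}. Write $W_{p,u}=(D_{p,u}^\top K_{h,u}D_{p,u})^{-1}D_{p,u}^\top K_{h,u}$, so that
$$\hat Q_p^{(v)}(u)=v!\,e_v^\top W_{p,u}X_{(\cdot)}.$$
Since the design points $i/n$ are deterministic, the matrix $W_{p,u}$ is deterministic.

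Let $U_{(1)}<\dots<U_{(n)}$ be the order statistics of the uniform variables $U_i=F(X_i)$. Then $X_{(i)}=Q(U_{(i)})$ almost surely. Decompose
$$X_{(i)}=Q(i/n)+\big(Q(U_{(i)})-Q(i/n)\big).$$
This gives
$$\hat Q_p^{(v)}(u)-Q^{(v)}(u)=\Big[v!e_v^\top W_{p,u}Q(\cdot/n)-Q^{(v)}(u)\Big]+v!e_v^\top W_{p,u}\big(Q(U_{(\cdot)})-Q(\cdot/n)\big).$$
We divide both sides by $Q^{(v)}(u)$ and show that each bracket is $o_P(Q^{(v)}(u))$.

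\emph{Deterministic part.} The weights reproduce polynomials of degree at most $p$: $v!e_v^\top W_{p,u}r$ returns the $v$-th derivative at $u$ of any polynomial $r$ of degree at most $p$. We Taylor expand $Q(i/n)$ around $u$ to order $p$, with Lagrange remainder controlled by $\sup_{t\in I_n(u)}|Q^{(p+1)}(t)|\,|i/n-u|^{p+1}/(p+1)!$. Only indices with $i/n\in I_n(u)$ carry nonzero weight, and $|i/n-u|\le h$ there. We then need the standard weight bound $\sum_i|v!e_v^\top W_{p,u}e_i|\le C h^{-v}$, uniformly in $u$ including boundary positions. It follows from $h^{-j}$ scaling of the matrix $n^{-1}D^\top K D$ to a positive definite limit of type $S_p$ (with $I$ depending on the position), using the Lipschitz kernel of Assumption \ref{Annahme an den Kern}. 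With this bound the deterministic part is $O(h^{p+1-v}\sup|Q^{(p+1)}|)$. Dividing by $Q^{(v)}(u)$, it is $o(1)$ by the second condition of Assumption \ref{Annahme für unbounded support}.

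\emph{Stochastic part.} We use the uniform quantile process. Since $i_U^n(u)<n$, all relevant $U_{(i)}$ stay away from the top order statistic; on the lower side, at $i\ge1$, the $\gamma$-condition of Assumption \ref{Annahme für Csörgo 1978 Theorem 3} (via Theorem 3 of \cite{csorgHo1978strong}, as modified in \cite{csorgHo1982general}) gives a strong approximation
$$\sqrt n\,f(Q(i/n))\big(Q(U_{(i)})-Q(i/n)\big)=\sqrt n\,(U_{(i)}-i/n)\,(1+o_P(1))$$
uniformly in the relevant range. This equivalently bounds $|Q(U_{(i)})-Q(i/n)|\lesssim q(\xi_i)|U_{(i)}-i/n|$ with $\xi_i$ between the two points, and comparability of $q(\xi_i)$ with $\sup_{t\in I_n(u)}q(t)$ follows from the $\gamma$-regularity together with the monotonicity of $f$ near the endpoints. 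In addition, $\max_i|U_{(i)}-i/n|=O_P(n^{-1/2})$ by the Kolmogorov--Smirnov bound. Combined with the weight bound, the stochastic part is
$$O_P\big(h^{-v}n^{-1/2}\sup_{t\in I_n(u)}q(t)\big).$$
After division by $Q^{(v)}(u)$ this is $o_P(1)$ by the first condition of Assumption \ref{Annahme für unbounded support}. Adding both parts yields (\ref{unbounded-result}).

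The main obstacle is the stochastic part near the edges. There one must control $q$ at the random intermediate point $\xi_i$, which may lie outside $I_n(u)$ when $U_{(i)}$ falls below $i_L^n(u)/n$. This is exactly where the Csörgő--Révész $\gamma$-condition enters. I would first try its standard consequence $q(s)/q(t)\le\{(s\vee t)(1-s\wedge t)/((s\wedge t)(1-s\vee t))\}^{\gamma}$, together with the fact that $U_{(i)}/(i/n)$ is bounded in probability uniformly in $i$ (by the law of the iterated logarithm for ratios, \cite{csorgHo1978strong}). Together these give $q(\xi_i)\le C_P\sup_{t\in I_n(u)}q(t)$.
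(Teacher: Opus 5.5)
Your argument is correct. Its deterministic half (polynomial reproduction, the weight bound $\sum_i|v!e_v^\top W_{p,u}e_i|\le Ch^{-v}$ from uniform invertibility of $S_{p,h,u}$, and the Taylor remainder over $I_n(u)$) matches the paper's treatment of $B_n(u)$. The difference is in the stochastic term.

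The paper never introduces an intermediate point. At the deterministic grid points $t=i/n\in I_n(u)$ it writes $|Q_n(t)-Q(t)|=n^{-1/2}q(t)|\rho_n(t)|\le n^{-1/2}q(t)\,(|\rho_n(t)-u_n(t)|+|u_n(t)|)$, with $\rho_n$ the standardized quantile process. It then uses Theorem 3 of \cite{csorgHo1978strong} on $I_n(u)\subseteq[1/(n+1),n/(n+1)]$ to get $\sup|\rho_n-u_n|=O_P(1)$; this is where $i_U^n(u)<n$ enters. Together with $\sup|u_n|=O_P(1)$ this finishes the term. On that route the ``main obstacle'' of your last paragraph never arises.

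Your first formulation of this step is off. The strong approximation is additive, $\rho_n=u_n+o(1)$, not multiplicative $(1+o_P(1))$; the multiplicative form fails wherever $u_n$ is near zero. It is also not equivalent to the mean-value bound. The additive $O_P(1)$ version is all you need.

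Your fallback is nevertheless a valid and more elementary alternative. Use the mean value theorem together with Lemma 1 of \cite{csorgHo1978strong}, $q(s)/q(t)\le\{(s\vee t)(1-s\wedge t)/((s\wedge t)(1-s\vee t))\}^{\gamma}$, which needs only the $\gamma$-condition and not the tail monotonicity. This gives $q(\xi_i)\le C_P\,q(i/n)$, provided $nU_{(i)}/i$, $i/(nU_{(i)})$, $(1-U_{(i)})/(1-i/n)$ and $(1-i/n)/(1-U_{(i)})$ are all $O_P(1)$ uniformly over $1\le i\le n-1$. Combined with $\max_i|U_{(i)}-i/n|\le\sup_t|\bar F_n(t)-t|=O_P(n^{-1/2})$, this reproduces the bound $O_P(h^{-v}n^{-1/2}\sup_{I_n(u)}q)$.

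Two details in that step need fixing:
\begin{itemize}
\item You need both directions of each ratio, not only $U_{(i)}/(i/n)$.
\item The right tool is the in-probability linear bounds for uniform order statistics (Daniels, and Shorack and Wellner), not a law of the iterated logarithm. Apply them also to the reflected sample $1-U_i$, using $1-i/n\ge 1/n$ for $i\le n-1$.
\end{itemize}

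Your route avoids the strong approximation theorem and its extension to $[1/(n+1),n/(n+1)]$, at the cost of a few order-statistic facts. The paper's route is shorter once that theorem is available.
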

The proof is given in the appendix. 
Note that the assumption $i_U^n(u) < n$ is needed for the case that $u \rightarrow 1$, as otherwise in the proof we have a summand with $Q_n(1)-Q(1)$ where for unbounded support we would have $Q(1) = \infty$. By assuming that $i_U^n(u) < n$ we can ignore the last summand due to $K_h(1-u) = 0$. But this assumption implies, that we cannot look at $u$ in the upper boundary case as in section \ref{Section: Definition of the estimators and main result}, meaning $u = 1-ch$ for $c\in [0,1)$, as then $i_U^n(u) = n$. But still sequences of $u$ that converge to one are allowed, if the convergence is not too fast.
\begin{bsp}
    \label{Beispiel für unbounded support}
    \rm 
     {One can easily check, that all the assumptions of Theorem \ref{Satz: Konsistenz im unbegrenzten Randfall, modifizierte Annahmen} are satisfied for standard normal distributed and for Exp(1)-distributed samples,} if we are interested in $v\geq 1$ and choose $h = n^{-\beta_v}$ with $0<\beta_v < \frac{1}{2v}$ and let $u_n = 1-h^\gamma$ for some $\gamma\in (0,1)$. Note that this implies
    $u_n>1-h$, but $u_n \rightarrow 1$.
\end{bsp}

\section{Simulation}
\label{Abschnitt: Simulation}
We did simulations using R (\cite{R}) for different cases and compared the mean squared error at fixed points and the mean integrated squared error. We oriented our simulations on the simulations in \cite{chesneau2016nonparametric}.
For comparison we use the three estimators $\Tilde q_1, \Tilde q_2, \Tilde q_3$ introduced in section \ref{Abschnitt: Comparison to other estimators}.
For all estimators we used the triangular kernel and we tested different bandwidths. The used distributions are the beta distribution $\text{Beta}(0.5,0.5)$ and the generalized lambda distribution (GLD) with different parameters. We mostly used $p=2$ and for some cases $p=1$ or $p=4$ for our local polynomial estimator.\\
In tables \ref{Tabelle: MSE versch Schätzer, GLD(0,7,7,7)} and \ref{Tabelle: MSE versch Schätzer, Beta(0.5,0.5)} we listed the mean squared errors of our simulations at fixed points for the four different estimators for a GLD$(0,7,7,7)$ and a $\text{Beta}(0.5,0.5)$ sample. We repeated each simulation $500$ times and give the average squared error. For the tables we used $p=2$ for our estimator and used the bandwidths $h\in \{0.15,0.19,0.25,0.35\}$ for the sample sizes $n=100,n=200$ and $n=500$.  For data-based bandwidth selection we further  used a plug-in estimator of the asymptotic MSE-optimal bandwidth 
\begin{eqnarray}
\label{Gleichung: MSE bandwidth}
    h_n(u) = \tau^{1/5}\left(\frac{3q(u)}{q^{\prime\prime}(u)\mu_4}\right)^{2/5}n^{-1/5} \,\,\,,\,\,\, \tau = \int_{-1}^1\int_{-1}^1 (x\wedge t)xt K(x)K(t)\, dx dt,
\end{eqnarray}
which is based on the bias and variance formulas from subsection \ref{subsec-boundedsupport}. Here we used our local polynomial estimator with $p=2, h = 0.15$ to estimate $q(u)$ and $p=3,h=0.25$ to estimate $q''(u)$.\\
While the results in the interior are quite similar for all estimators, we see a lower mean squared error at the boundary points for the new suggested local polynomial estimator. And while for the other estimators a bandwidth of $h=0.15$ seems to give the best result, for the local polynomial estimator a slightly larger bandwidth $h=0.25$ seems to be better. We also see, that the estimated bandwidth performs well. {For a better comparison we plotted the MSE values of the Beta$(0.5,0.5)$ distribution from the tables in figure \ref{fig:Plots_MSE_PI}.
\begin{figure}[t]
	\centering
	\includegraphics[width=5.25cm, height=4.75cm]{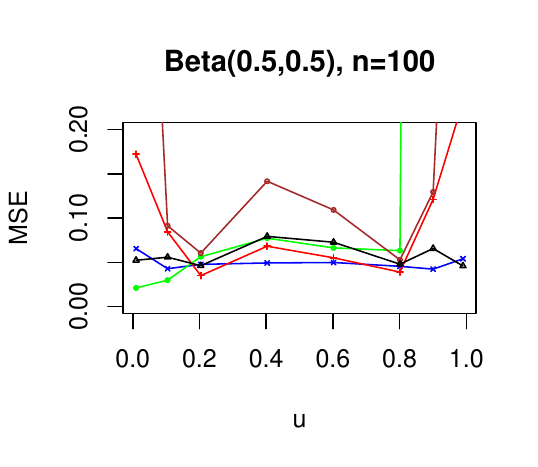}
	\includegraphics[width=5.25cm, height=4.75cm]{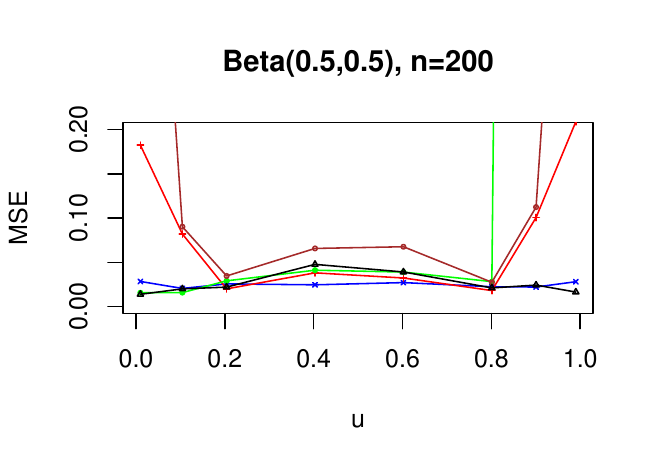}
	\includegraphics[width=5.25cm, height=4.75cm]{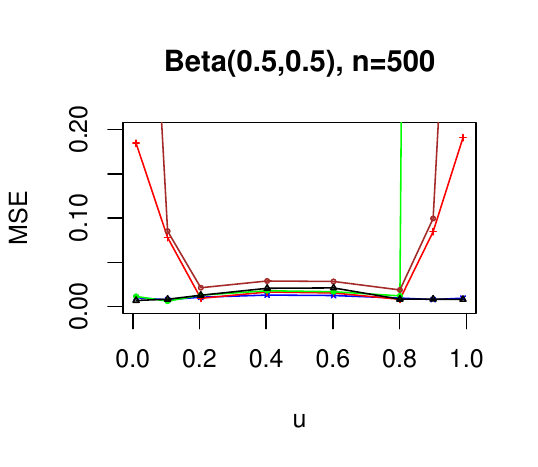}
	\captionsetup{font={footnotesize,stretch=1}}
	\caption{Plots of the MSE (mean squared error) after $500$ simulations using the Beta$(0.5,0.5)$ distribution. The blue crossed line is our local polynomial estimator, where we used $p=2$ and $h = 0.25$. The black line with the triangles is our local polynomial estimator with $p=2$ and $h(u)$ is the plug-in estimator from \eqref{Gleichung: MSE bandwidth}. $\Tilde q_1$ is the brown circled line, $\Tilde q_2$ is the green dotted line and $\Tilde q_3$ is the red line with the plus signs, for the other estimators we used $h=0.15$. The values are taken from table \ref{Tabelle: MSE versch Schätzer, Beta(0.5,0.5)}.}
	\label{fig:Plots_MSE_PI}
\end{figure}
Here we especially see the better convergence behaviour at the boundary and also more constant results in the interior, especially at smaller sample sizes for the fixed bandwidth of $h=0.25$ for our estimator.}  Our simulation results for the estimators $\Tilde q_1, \Tilde q_2, \Tilde q_3$ are  similar to the simulation results in \cite{chesneau2016nonparametric} in tables 2-4. And when comparing the results of the new local polynomial estimator we find better results at the boundary, also when compared to their wavelet-based quantile density estimator.
\begin{figure}[t]
	\centering
	\includegraphics[width=7.8cm, height=5cm]{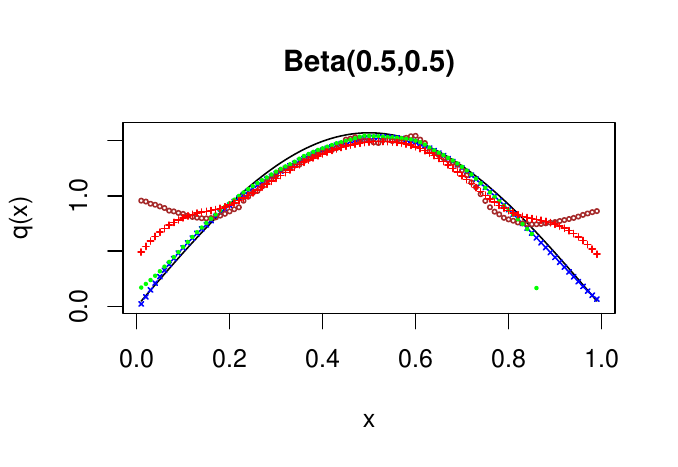}
	\includegraphics[width=7.8cm, height=5cm]{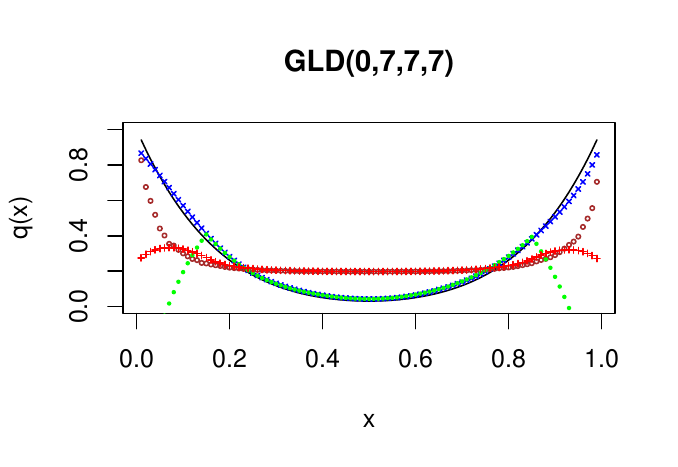}
    \captionsetup{font={footnotesize,stretch=1}}
	\caption{Plots of the average of $10$ simulations of sample size $n=200$ using the bounded Beta$(0.5,0.5)$, GLD$(0,7,7,7)$ distributions. The true $q$ is the black line. The blue crossed line is our local polynomial estimator, where we used $p=2$ and $h$ is the plug-in estimator of the global asymptotic IMSE optimal bandwidth from \eqref{Gleichung IMSE Bandwidth}. $\Tilde q_1$ is the brown circled line, $\Tilde q_2$ is the green dotted line and $\Tilde q_3$ is the red line  {with the plus signs}, for the other estimators we used $h=0.15$.}
    \label{fig:Plots_bounded_PI}
\end{figure}
In figures \ref{fig:Plots_bounded_PI} and \ref{fig:Plots_unbounded} we plotted the average of $10$ simulations for our estimator and compare it to the average of the estimators $\Tilde q_1,\Tilde q_2,\Tilde q_3$. We used the Beta$(0.5,0.5)$ and GLD$(0,7,7,7)$ distribution for two bounded cases and the $\mathcal{N}(0,1)$ and Exp$(1)$ distributions for two unbounded cases. In the bounded case we used $p=2$ for our estimator and a plug-in estimator for the asymptotic IMSE-optimal bandwidth 
\begin{eqnarray}
\label{Gleichung IMSE Bandwidth}
    h_n = \tau^{1/5}\left(\frac{3}{\mu_4}\right)^{2/5}\left(\frac{\int q(u)^2\, du}{\int (q^{\prime\prime}(u))^2\,du}\right)^{1/5}n^{-1/5},
\end{eqnarray}
where we estimate $q,q''$ as for the local bandwidth and approximate the integral on a grid.\\
For the unbounded cases we used $p=4$ and a fixed bandwidth of $h=0.19$, as we only have the consistency in the unbounded case and cannot use the MSE calculations based on Theorem \ref{Satz: Konvergenz der Schätzer}.
For the other estimators we used $h=0.15$, which is the one suggested and used by \cite{chesneau2016nonparametric} and \cite{soni2012nonparametric}, that also seemed to perform best with our data in table \ref{Tabelle: MSE versch Schätzer, GLD(0,7,7,7)} and \ref{Tabelle: MSE versch Schätzer, Beta(0.5,0.5)}. We clearly see the expected better results for the new suggested estimator at the boundaries in both the unbounded and bounded cases.\\
\begin{figure}
	\centering
	\includegraphics[width=7.8cm, height=5cm]{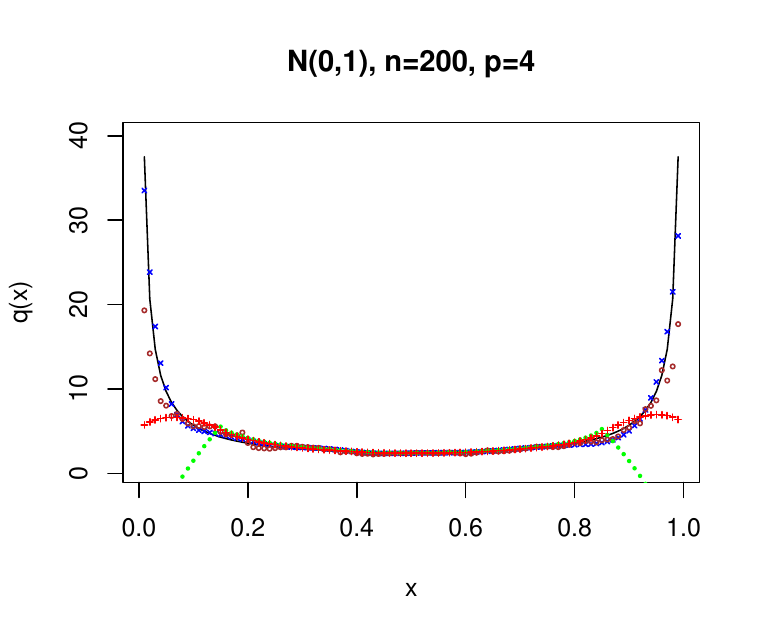}
	\includegraphics[width=7.8cm, height=5cm]{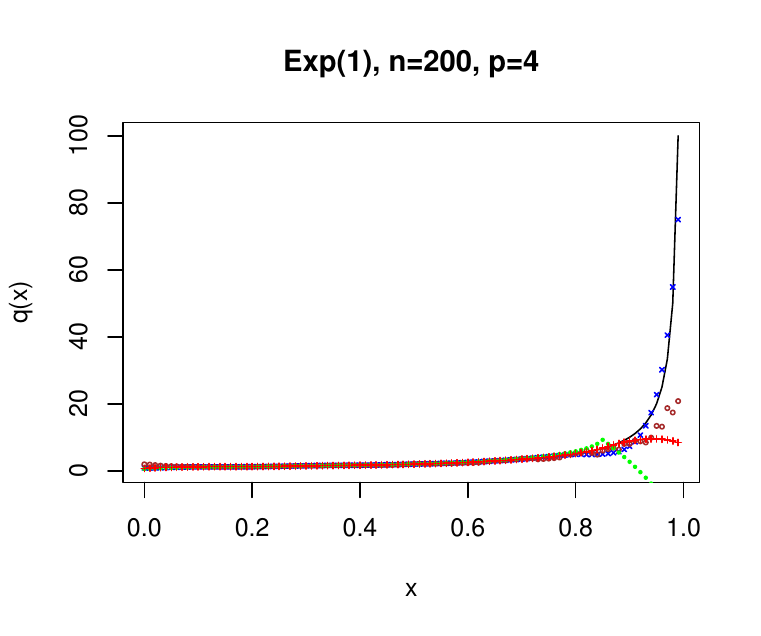}
    \captionsetup{font={footnotesize,stretch=1}}
	\caption{Plots of the average of $10$ simulations of sample size $n=200$ using the unbounded $\mathcal{N}(0,1)$, Exp$(1)$ distributions. We used $p=4,h=0.19$ for our estimator and $h=0.15$ for the other estimators. The lines are the same as in figure \ref{fig:Plots_bounded_PI}.}
    \label{fig:Plots_unbounded}
\end{figure}
For a global comparison we also looked at the mean integrated squared error, which is calculated as stated in the simulation section of \cite{chesneau2016nonparametric}. In table \ref{Tabelle: MISE-Values for different bandwidths} we compare the MISE of our local polynomial estimator for $p=2$ and for some cases also $p=1$ with different bandwidths to the MISE of the simulations in \cite{chesneau2016nonparametric}. We observe that for all sample sizes and distributions there is a combination of a bandwidth $h$ and $p$ such that the MISE of our estimator is smaller. This naturally brings up the question on how to choose the bandwidth and even the degree of the local polynomial smoothing. Especially for $n=500$ our estimator has a lower MISE for the $\text{Beta}(0.5,0.5)$, GLD$(0,7,7,7)$, GLD$(0.5,1,2,6)$ for all tested bandwidths except for $h=0.45$. We also calulated the MISE for the bandwidth from \eqref{Gleichung IMSE Bandwidth}. Here we also find better results for the $\text{Beta}(0.5,0.5)$, GLD$(0,7,7,7)$, GLD$(0.5,1,2,6)$ at $n=500$ compared to \cite{chesneau2016nonparametric}. \\
We also tested the accuracy of the confidence intervals constructed at the end of section \ref{Section: Definition of the estimators and main result}. We calculated the confidence interval for $1000$ samples of the GLD$(0.5,2,1.5,1.5)$ distribution and counted how often the true value  $q(u)$ was inside this interval. We tested this for $u=0,0.1,0.2,...,0.9,1$ and sample sizes $n=100,500,1000$ and used $p=2, h= 0.12,0.1,0.08$. In figure \ref{fig:Plots_KI} we see, that the coverage of the confidence intervals got closer to $1-\alpha$ with increasing sample size. Especially for the interior points we see very good results.
Also in figure \ref{fig:Plots_KI} we give the estimated density of $1000$ normalized estimations of $q(0.5)$ for the same setting that was used for the confidence intervals and compare it to the density of the standard normal distribution, which is the expected asymptotic distribution. We observe, that the standard normal distribution is a good estimation of this distribution and that we get closer to the standard normal distribution with increasing sample size. This also fits well to the good coverage rate of the confidence intervals, that were constructed using the normal distribution.

\begin{figure}[t]
	\centering
    \includegraphics[width=7.8cm, height=5cm]{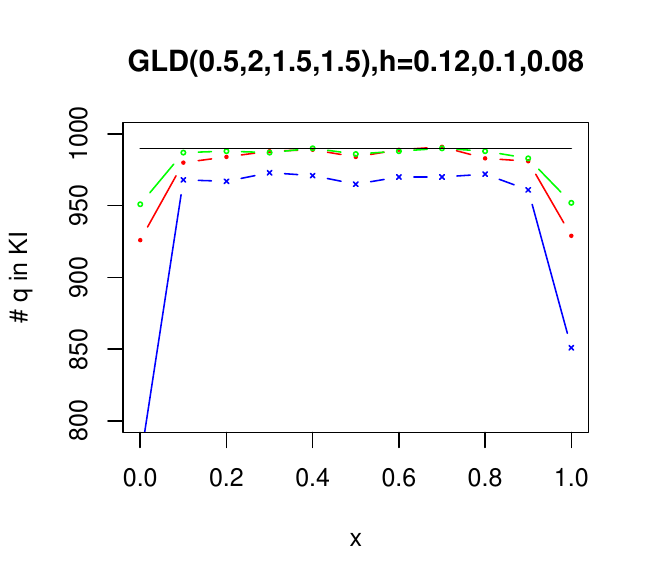}
	\includegraphics[width=7.8cm, height=5cm]{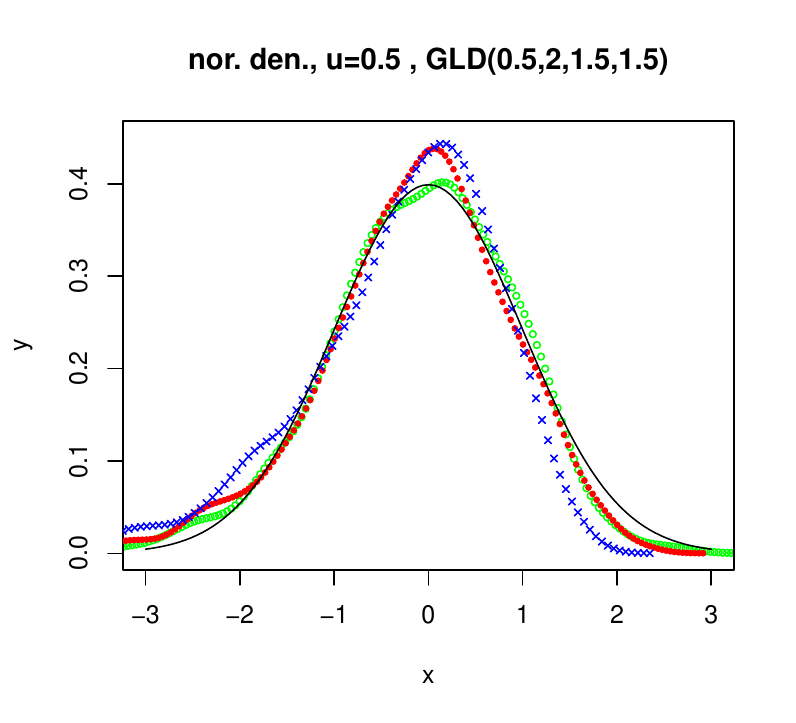}
    \captionsetup{font={footnotesize,stretch=1}}
	\caption{Left is a plot of the number of times, that $q(u)$ is inside the confidence interval for $\alpha = 0.01$ after $1000$ simulations. We used $u= 0,0.1,\dots,0.9,1$ for the GLD$(0.5,2,1.5,1.5)$ distribution and sample sizes of $n=100$ (blue  {crossed)}, $n=500$ (red  {dotted}) and $n=1000$ (green  {circled}). The black line is the expected $990$ observations in the confidence interval.\\
    For the same setting we estimated the normalized density of $1000$ simulations of $\hat{q}_2(u)$ and compared it to the standard normal distribution. We used $u=0.5$ and the colors and distribution are the same as above.}
    \label{fig:Plots_KI}
\end{figure}

\begin{table}[ht]
	\centering
	\resizebox{\textwidth}{!}{
		\begin{tabular}{ll|rrrrr|rrrrr|rrrrr}
			\toprule
			Sample Size & & \multicolumn{5}{c|}{$n=100$} & \multicolumn{5}{c|}{$n=200$} & \multicolumn{5}{c}{$n=500$} \\
			$u$ & $h$ & 0.15 & 0.19 & 0.25 & 0.35 & \text{PI} & 0.15 & 0.19 & 0.25 & 0.35 & \text{PI} & 0.15 & 0.19 & 0.25 & 0.35 & \text{PI}\\ 
			\toprule
			0.01 & loc & 0.1478 & 0.1168 & 0.0683 & 0.0590 & 0.1073 & 0.0683 & 0.0531 & 0.0347 & 0.0357 & 0.0592 & 0.0263 & 0.0216 & 0.0164 & 0.0257 & 0.0283\\ 
			& $\Tilde q_1$ & 0.0441 & 0.1231 & 0.1653 & 0.1413 & & 0.0324 & 0.1259 & 0.1662 & 0.1405 & & 0.0294 & 0.1245 & 0.1652 & 0.1412 &\\ 
			& $\Tilde q_2$ & 2.0033 & 1.6260 & 1.3309 & 1.1277 & & 2.0037 & 1.6373 & 1.3347 & 1.1275 & & 1.9984 & 1.6278 & 1.3367 & 1.1235 &\\ 
			&$\Tilde q_3$& 0.4692 & 0.5329 & 0.5413 & 0.4994 & & 0.4546 & 0.5216 & 0.5345 & 0.4944 & & 0.4492 & 0.5176 & 0.5295 & 0.4922 &\\
			[0.8em]
			0.1045 & loc & 0.0068 & 0.0062 & 0.0051 & 0.0052 & 0.0054 & 0.0037 & 0.0036 & 0.0034 & 0.0020 & 0.0024 & 0.0016 & 0.0017 & 0.0014 & 0.0008 & 0.0011 \\ 
			& $\Tilde q_1$ & 0.0526 & 0.0439 & 0.0276 & 0.0056 & & 0.0523 & 0.0416 & 0.0266 & 0.0053 & & 0.0538 & 0.0432 & 0.0268 & 0.0055 &\\ 
			& $\Tilde q_2$ & 0.1043 & 0.1698 & 0.2160 & 0.2430 & &  0.0973 & 0.1647 & 0.2133 & 0.2408 & & 0.0946 & 0.1610 & 0.2103 & 0.2397 &\\ 
			&$\Tilde q_3$& 0.0409 & 0.0500 & 0.0495 & 0.0363 & & 0.0405 & 0.0481 & 0.0483 & 0.0353 & & 0.0410 & 0.0483 & 0.0474 & 0.0351 &\\ 
			[0.8em]
			0.2040 & loc & 0.0049 & 0.0039 & 0.0035 & 0.0048 & 0.0044 & 0.0025 & 0.0027 & 0.0027 & 0.0042 & 0.0038 & 0.0011 & 0.0015 & 0.0024 & 0.0034 & 0.0028 \\ 
			& $\Tilde q_1$ & 0.0014 & 0.0002 & 0.0029 & 0.0227 & & 0.0014 & 0.0001 & 0.0029 & 0.0227 & & 0.0013 & 0.0001 & 0.0029 & 0.0225 &\\ 
			& $\Tilde q_2$ & 0.0038 & 0.0031 & 0.0052 & 0.0246 & & 0.0022 & 0.0025 & 0.0043 & 0.0241 & & 0.0011 & 0.0017 & 0.0037 & 0.0241 &\\ 
			&$\Tilde q_3$& 0.0008 & 0.0004 & 0.0043 & 0.0160 & & 0.0008 & 0.0003 & 0.0043 & 0.0162 & & 0.0008 & 0.0002 & 0.0043 & 0.0162 &\\ 
			[0.8em]
			0.4030 & loc & 0.0007 & 0.0008 & 0.0011 & 0.0028 & 0.0039 & 0.0003 & 0.0005 & 0.0007 & 0.0023 & 0.0018 & 0.0002 & 0.0002 & 0.0005 & 0.0019 & 0.0007 \\ 
			& $\Tilde q_1$ & 0.0220 & 0.0341 & 0.0585 & 0.1160 & & 0.0218 & 0.0343 & 0.0586 & 0.1159 &  &0.0217 & 0.0341 & 0.0587 & 0.1158& \\ 
			& $\Tilde q_2$ & 0.0006 & 0.0008 & 0.0012 & 0.0037 & & 0.0003 & 0.0005 & 0.0010 & 0.0034 & & 0.0002 & 0.0003 & 0.0008 & 0.0031 &\\ 
			&$\Tilde q_3$& 0.0222 & 0.0345 & 0.0595 & 0.1192 & & 0.0220 & 0.0347 & 0.0595 & 0.1191 & & 0.0219 & 0.0345 & 0.0596 & 0.1188 &\\ 
			[0.8em]
			0.6020 & loc & 0.0006 & 0.0006 & 0.0009 & 0.0023 & 0.0036 & 0.0003 & 0.0004 & 0.0007 & 0.0021 & 0.0018 & 0.0001 & 0.0003 & 0.0006 & 0.0020 & 0.0006 \\ 
			& $\Tilde q_1$ & 0.0214 & 0.0334 & 0.0577 & 0.1147 & & 0.0213 & 0.0336 & 0.0576 & 0.1146 & & 0.0211 & 0.0334 & 0.0577 & 0.1145 &\\ 
			& $\Tilde q_2$ & 0.0007 & 0.0008 & 0.0013 & 0.0037 & & 0.0004 & 0.0006 & 0.0010 & 0.0034 & & 0.0002 & 0.0004 & 0.0009 & 0.0033 &\\ 
			&$\Tilde q_3$& 0.0216 & 0.0337 & 0.0585 & 0.1176 & & 0.0214 & 0.0340 & 0.0585 & 0.1176 & & 0.0213 & 0.0338 & 0.0587 & 0.1175 &\\ 
			[0.8em]
			0.8010 & loc & 0.0040 & 0.0032 & 0.0024 & 0.0028 & 0.0029 & 0.0023 & 0.0020 & 0.0022 & 0.0029 & 0.0028 & 0.0011 & 0.0013 & 0.0021 & 0.0030 & 0.0025 \\ 
			& $\Tilde q_1$ & 0.0020 & 0.0003 & 0.0023 & 0.0205 & & 0.0020 & 0.0002 & 0.0021 & 0.0202 & & 0.0021 & 0.0001 & 0.0021 & 0.0200 &\\ 
			& $\Tilde q_2$ & 0.0037 & 0.0033 & 0.0069 & 0.0288 & & 0.0023 & 0.0024 & 0.0063 & 0.0283 & & 0.0012 & 0.0018 & 0.0056 & 0.0279 &\\ 
			&$\Tilde q_3$& 0.0015 & 0.0002 & 0.0032 & 0.0140 & & 0.0014 & 0.0001 & 0.0031 & 0.0138 & & 0.0013 & 0.0001 & 0.0032 & 0.0136 &\\ 
			[0.8em]
			0.9005 & loc & 0.0079 & 0.0055 & 0.0057 & 0.0057 & 0.0084 & 0.0033 & 0.0028 & 0.0026 & 0.0027 & 0.0033 & 0.0015 & 0.0012 & 0.0011 & 0.0009 & 0.0009 \\ 
			& $\Tilde q_1$ & 0.0551 & 0.0466 & 0.0304 & 0.0076 & & 0.0565 & 0.0468 & 0.0309 & 0.0077 & & 0.0585 & 0.0481 & 0.0310 & 0.0077 &\\ 
			& $\Tilde q_2$ & 0.1341 & 0.1964 & 0.2423 & 0.2660 & & 0.1252 & 0.1945 & 0.2389 & 0.2641 & & 0.1223 & 0.1917 & 0.2375 & 0.2631 &\\ 
			&$\Tilde q_3$& 0.0495 & 0.0575 & 0.0559 & 0.0418 & & 0.0481 & 0.0563 & 0.0563 & 0.0424 & & 0.0476 & 0.0566 & 0.0561 & 0.0428 &\\
			[0.8em]
			0.99 & loc & 0.1090 & 0.0834 & 0.0668 & 0.0626 & 0.1119 & 0.0596 & 0.0473 & 0.0372 & 0.0416 & 0.0065 & 0.0275 & 0.0205 & 0.0188 & 0.0295 & 0.0302\\ 
			& $\Tilde q_1$ & 0.0956 & 0.1716 & 0.1946 & 0.1559 & & 0.0619 & 0.1496 & 0.1826 & 0.1501 & & 0.0392 & 0.1349 & 0.1728 & 0.1457& \\ 
			& $\Tilde q_2$ & 2.0028 & 1.6213 & 1.3382 & 1.1260 & & 1.9994 & 1.6368 & 1.3353 & 1.1247 & & 1.9936 & 1.6318 & 1.3408 & 1.1254 &\\ 
			&$\Tilde q_3$& 0.4422 & 0.5091 & 0.5210 & 0.4832 & & 0.4420 & 0.5103 & 0.5246 & 0.4867 & & 0.4432 & 0.5135 & 0.5257 & 0.4888 &\\
			\toprule
		\end{tabular}
	}
	\caption{The MSE for different estimators. The local polynomial quantile estimator uses $p=2$. The MSE is built as an average of $500$ simulations of a GLD$(0,7,7,7)$ distributed sample for the sample sizes $n=100$ (left), $200$ (middle) and $500$ (right). In the \textit{PI}-column we used a plug-in estimator for the local asymptotic MSE-optimal bandwidth from \eqref{Gleichung: MSE bandwidth}.}
	\label{Tabelle: MSE versch Schätzer, GLD(0,7,7,7)}
\end{table}

\begin{table}[ht]
	\centering
	\resizebox{\textwidth}{!}{
		\begin{tabular}{ll|rrrrr|rrrrr|rrrrr}
			\toprule
			Sample Size & & \multicolumn{5}{c|}{$n=100$} & \multicolumn{5}{c|}{$n=200$} & \multicolumn{5}{c}{$n=500$} \\
			$u$ & $ h$ & 0.15 & 0.19 & 0.25 & 0.35 & \text{PI} & 0.15 & 0.19 & 0.25 & 0.35 & \text{PI} & 0.15 & 0.19 & 0.25 & 0.35 & \text{PI}\\ 
			\toprule
			0.01 & lok & 0.0452 & 0.0486 & 0.0655 & 0.0847 & 0.0523 & 0.0175 & 0.0195 & 0.0284 & 0.0353 & 0.0139 & 0.0060 & 0.0081 & 0.0100 & 0.0147 & 0.0069\\ 
			& $\Tilde q_1$ & 0.7883 & 1.0142 & 1.3072 & 1.8107 & & 0.7633 & 0.9299 & 1.2396 & 1.7166 & & 0.7324 & 0.9369 & 1.2284 & 1.6874 &\\ 
			& $\Tilde q_2$ & 0.0211 & 0.0294 & 0.0446 & 0.0765 & & 0.0156 & 0.0208 & 0.0367 & 0.0664 & & 0.0115 & 0.0197 & 0.0334 & 0.0625 &\\ 
			&$\Tilde q_3$& 0.1721 & 0.2160 & 0.2707 & 0.3640 & & 0.1823 & 0.2137 & 0.2723 & 0.3602 & & 0.1845 & 0.2231 & 0.2789 & 0.3641 &\\ 
			[0.8em]
			0.1045 & lok & 0.0344 & 0.0362 & 0.0428 & 0.0503 & 0.0558 & 0.0184 & 0.0165 & 0.0205 & 0.0232 & 0.0200 & 0.0069 & 0.0079 & 0.0077 & 0.0083 & 0.0083\\ 
			& $\Tilde q_1$ & 0.0913 & 0.1753 & 0.3105 & 0.5876 & & 0.0902 & 0.1589 & 0.2976 & 0.5588 & & 0.0852 & 0.1591 & 0.2953 & 0.5493 &\\ 
			& $\Tilde q_2$ & 0.0298 & 0.0271 & 0.0251 & 0.0222 & & 0.0158 & 0.0127 & 0.0120 & 0.0106 & & 0.0061 & 0.0058 & 0.0051 & 0.0057 &\\ 
			&$\Tilde q_3$& 0.0845 & 0.1164 & 0.1430 & 0.1839 & & 0.0820 & 0.1020 & 0.1349 & 0.1727 & & 0.0780 & 0.1041 & 0.1359 & 0.1733 &\\
			[0.8em]
			0.2040 & lok & 0.0615 & 0.0508 & 0.0477 & 0.0418 & 0.0462 & 0.0323 & 0.0283 & 0.0256 & 0.0203 & 0.0220 & 0.0141 & 0.0115 & 0.0106 & 0.0091 & 0.0128\\ 
			& $\Tilde q_1$ & 0.0604 & 0.0300 & 0.0237 & 0.0620 & & 0.0346 & 0.0174 & 0.0113 & 0.0469 & & 0.0213 & 0.0091 & 0.0055 & 0.0436 &\\ 
			& $\Tilde q_2$ & 0.0563 & 0.0453 & 0.0404 & 0.0268 & & 0.0291 & 0.0256 & 0.0222 & 0.0152 & & 0.0129 & 0.0107 & 0.0098 & 0.0088 &\\ 
			&$\Tilde q_3$& 0.0350 & 0.0266 & 0.0350 & 0.0476 & & 0.0200 & 0.0144 & 0.0241 & 0.0372 & & 0.0093 & 0.0068 & 0.0190 & 0.0358 &\\  
			[0.8em]
			0.4030 & lok & 0.0900 & 0.0623 & 0.0493 & 0.0399 & 0.0793 & 0.0465 & 0.0359 & 0.0246 & 0.0257 & 0.0477 & 0.0204 & 0.0154 & 0.0130 & 0.0175 & 0.0207 \\ 
			& $\Tilde q_1$ & 0.1416 & 0.0951 & 0.0875 & 0.0720 & & 0.0657 & 0.0553 & 0.0417 & 0.0472 & & 0.0289 & 0.0222 & 0.0193 & 0.0291 & \\ 
			& $\Tilde q_2$ & 0.0773 & 0.0533 & 0.0429 & 0.0411 & & 0.0411 & 0.0310 & 0.0229 & 0.0302 & & 0.0179 & 0.0140 & 0.0133 & 0.0236 & \\ 
			&$\Tilde q_3$& 0.0682 & 0.0560 & 0.0597 & 0.0562 & & 0.0381 & 0.0334 & 0.0387 & 0.0469 & & 0.0165 & 0.0192 & 0.0276 & 0.0399 & \\
			[0.8em]
			0.6020 & lok & 0.0735 & 0.0652 & 0.0498 & 0.0440 & 0.0727 & 0.0450 & 0.0361 & 0.0271 & 0.0270 & 0.0391 & 0.0189 & 0.0150 & 0.0125 & 0.0189 & 0.0211\\ 
			& $\Tilde q_1$ & 0.1092 & 0.1091 & 0.0902 & 0.0888 & & 0.0676 & 0.0552 & 0.0482 & 0.0530 & & 0.0284 & 0.0223 & 0.0181 & 0.0354 &\\ 
			& $\Tilde q_2$ & 0.0663 & 0.0577 & 0.0458 & 0.0496 & & 0.0390 & 0.0318 & 0.0260 & 0.0329 & & 0.0168 & 0.0137 & 0.0133 & 0.0257 &\\ 
			&$\Tilde q_3$& 0.0551 & 0.0543 & 0.0549 & 0.0604 & & 0.0323 & 0.0311 & 0.0379 & 0.0459 & & 0.0154 & 0.0173 & 0.0272 & 0.0398 & \\ 
			[0.8em]
			0.8010 & lok & 0.0724 & 0.0521 & 0.0455 & 0.0374 & 0.0479 & 0.0316 & 0.0267 & 0.0224 & 0.0170 & 0.0211 & 0.0129 & 0.0117 & 0.0096 & 0.0096 & 0.0083\\ 
			& $\Tilde q_1$ & 0.0528 & 0.0295 & 0.0237 & 0.0592 & & 0.0275 & 0.0137 & 0.0123 & 0.0538 & & 0.0188 & 0.0070 & 0.0061 & 0.0495 \\ 
			& $\Tilde q_2$ & 0.0632 & 0.0462 & 0.7692 & 1.7397 & & 0.0282 & 0.0243 & 0.7532 & 1.7011 & & 0.0121 & 0.0108 & 0.7510 & 1.7070 \\ 
			&$\Tilde q_3$& 0.0390 & 0.0296 & 0.0418 & 0.0464 & & 0.0181 & 0.0168 & 0.0289 & 0.0422 & & 0.0082 & 0.0086 & 0.0209 & 0.0377 \\
			[0.8em]
			0.9005 & lok & 0.0421 & 0.0362 & 0.0423 & 0.0480 & 0.0656 & 0.0199 & 0.0168 & 0.0221 & 0.0207 & 0.0243 & 0.0066 & 0.0073 & 0.0079 & 0.0084 & 0.0084\\ 
			& $\Tilde q_1$ & 0.1295 & 0.2068 & 0.3591 & 0.6083 & & 0.1124 & 0.1897 & 0.3420 & 0.5974 & & 0.0995 & 0.1857 & 0.3231 & 0.5831 &\\ 
			& $\Tilde q_2$ & 4.8961 & 6.1606 & 5.6445 & 3.9635 & & 4.9809 & 6.2463 & 5.6507 & 3.9465 & & 5.0659 & 6.2426 & 5.7043 & 3.9755 &\\ 
			&$\Tilde q_3$& 0.1212 & 0.1419 & 0.1705 & 0.1943 & & 0.1006 & 0.1226 & 0.1563 & 0.1882 & & 0.0846 & 0.1165 & 0.1438 & 0.1803 &\\
			[0.8em]
			0.99 & lok & 0.0390 & 0.0547 & 0.0540 & 0.0828 & 0.0459 & 0.0212 & 0.0187 & 0.0281 & 0.0386 & 0.0164 & 0.0062 & 0.0080 & 0.0095 & 0.0162 & 0.0081\\ 
			& $\Tilde q_1$ & 0.8080 & 1.0022 & 1.2785 & 1.7225 & & 0.7752 & 0.9397 & 1.2605 & 1.6995 & & 0.7228 & 0.9324 & 1.2107 & 1.6564 &\\ 
			& $\Tilde q_2$ & 37.1985 & 23.3841 & 13.3294 & 6.3679 & & 37.3122 & 23.5355 & 13.3515 & 6.3798 & & 37.4573 & 23.5459 & 13.4262 & 6.4153 &\\ 
			&$\Tilde q_3$& 0.2312 & 0.2630 & 0.3142 & 0.3874 & & 0.2089 & 0.2397 & 0.2985 & 0.3768 & & 0.1908 & 0.2320 & 0.2836 & 0.3658  &\\
			\toprule
		\end{tabular}
	}
	\caption{The MSE for different estimators. The local polynomial quantile estimator uses $p=2$. The MSE is built as an average of $500$ simulations of a Beta$(0.5,0.5)$ distributed sample for the sample sizes $n=100$ (left), $200$ (middle) and $500$ (right). In the \textit{PI}-column we used a plug-in estimator for the local asymptotic MSE-optimal bandwidth from \eqref{Gleichung: MSE bandwidth}.}
	\label{Tabelle: MSE versch Schätzer, Beta(0.5,0.5)}
\end{table}

\begin{table}[h]
	\centering
	\resizebox{\textwidth}{!}{
\begin{tabular}{ll|rrrrr}
	\toprule
	 & Dist & Beta(0.5,0.5) & GL(0,7,7,7) & GL(0.5,1,2,6) & GL(0.5,2,1.5,1.5) & GL(0,1.5,1.5,1.5)\\
	\midrule
	$n=200$ & $h$/Wav & 0.0252 & 0.0078 & 0.1149 & 0.0101 & 0.0189\\
	 \midrule
	$p=1$ & 0.15 & 0.0409 & \underline{0.0077} & 0.1482 & 0.0235 & 0.0387\\
	& 0.19 & 0.0413 & 0.0105 & 0.1635 & 0.0175 & 0.0303\\
	& 0.25 & 0.0499 & 0.0179 & 0.2408 & 0.0141 & 0.0240\\
	& 0.35 & 0.0818 & 0.0304 & 0.4019 & \underline{0.0097} & \underline{0.0175}\\
	& 0.45 & 0.1275 & 0.0443 & 0.5852 & \underline{0.0079} & \underline{0.0162}\\
    & \text{PI} & 0.0427 & 0.0081 & 0.1626 & 0.0198 & 0.0337 \\
	\addlinespace
	$p=2$ & 0.15 & 0.0344 & \underline{0.0057} & 0.1802 & 0.0283 & 0.0522\\
	& 0.19 & 0.0288 & \underline{0.0047} & 0.1406 & 0.0238 & 0.0435\\
	& 0.25 & \underline{0.0245} & \underline{0.0042} & 0.1175 & 0.0185 & 0.0355\\
	& 0.35 & \underline{0.0250} & \underline{0.0052} & \underline{0.1132} & 0.0156 & 0.0284 \\
	& 0.45 & 0.0344 & 0.0084 & 0.1282 & 0.0133 & 0.0248\\
    & \text{PI} & 0.0302 & \underline{0.0059} & 0.1752 & 0.0248 & 0.0441\\
	 \midrule
	  $n=500$ & $h$/Wav & 0.0167 & 0.0067 & 0.0866 & 0.0050 & 0.0093\\
	  \midrule
	$p=1$ & 0.15 & 0.0207 & \underline{0.0062} & 0.0974 & 0.0092 & 0.0173\\
	& 0.19 & 0.0245 & 0.0097 & 0.1352 & 0.0080 & 0.0136\\
	& 0.25 & 0.0374 & 0.0166 & 0.2134 & 0.0063 & 0.0106\\
	& 0.35 & 0.0729 & 0.0304 & 0.3997 & 0.0052 & \underline{0.0090}\\
	& 0.45 & 0.1234 & 0.0451 & 0.5858 & \underline{0.0050} & \underline{0.0092}\\
    & \text{PI} & 0.0240 & \underline{0.0057} & 0.1061 & 0.0781  & 0.0140 
    \\
	\addlinespace
	$p=2$ & 0.15 & \underline{0.0142} & \underline{0.0023} & \underline{0.0755} & 0.0120 & 0.0209\\
	& 0.19 & \underline{0.0113} & \underline{0.0021} & \underline{0.0630} & 0.0099 & 0.0161\\
	& 0.25 & \underline{0.0101} & \underline{0.0026} & \underline{0.0549} & 0.0081 & 0.0143\\
	& 0.35 & \underline{0.0137} & \underline{0.0042} & \underline{0.0695} & 0.0064 & 0.0114\\
	& 0.45 & 0.0220 & 0.0073 & 0.1024 & 0.0058 & 0.0101\\
    & \text{PI} & \underline{0.0120} & \underline{0.0028} & \underline{0.0795} & 0.0102 & 0.0179\\
	\bottomrule
\end{tabular}
}
\caption{The MISE values of $500$ simulations for different bandwidths, sample sizes $n=200$ and $n=500$ and $p=1,2$ compared to the MISE values of the wavelet estimator in the simulations of \cite{chesneau2016nonparametric} (underlined values are $\leq$ than the MISE of the wavelet estimator). In the \textit{PI}-column we used a plug-in estimator for the asymptotic IMSE-optimal bandwidth from \eqref{Gleichung IMSE Bandwidth}.}
\label{Tabelle: MISE-Values for different bandwidths}
\end{table}

\section{Concluding Remarks}\label{concluding remarks}

Quantile density estimation is important in different statistical areas. We suggested a new estimator that has better boundary properties than classical estimators, even in the case of unbounded data support. The estimator is very simple to apply as one can use local polynomial regression procedures for the (pseudo-)data $( F_n(X_i),X_i)$, $i=1,\dots,n$. The estimator can be applied in many areas where estimators for the quantile density function (sparsity function) are needed. Our general result gives joint asymptotic normality of estimators for the quantile function and the derivatives which then can be applied for asymptotic results of functions based on several derivatives, e.g.\ the score function for the hazard quantile function. 
Currently we are working on  rates of uniform convergence and simultaneous confidence bands based on the new estimator. 
The estimation procedure can also be generalized, e.g.\ to estimate conditional quantile density functions as considered by \cite{Xiang}.


\begin{appendix}

\section{Proofs}

 {
For the proof of Theorem \ref{Satz: Konvergenz der Schätzer} we consider the interior and boundary cases concurrently. 
Thus we use notations depending on $u$,
    \begin{eqnarray}\label{Vpvh}
        \mathcal{V}_{p,v,h}(u) = \begin{cases}
            (v!)^2q(u)^2e_v^\top S_{p,h,u}^{-1}\Gamma_{p,h,u}S_{p,h,u}^{-1}e_v, & 1\leq v\leq p\\
            q(u)^2(u-u^2) ,&   v=0, u\text{ interior} \\
            hq(u)^2\left(e_0^\top S_{p,h,u}^{-1}\Gamma_{p,h,u}S_{p,h,u}^{-1}e_0 + c\right), & v = 0 , \, u = ch \text{ or } u = 1-ch 
        \end{cases}
    \end{eqnarray}   
with
   \begin{eqnarray}
   \label{eq: Definition von Gamma_{p,h,u}}
                 \Gamma_{p,h,u} &=& \int_{-u/h}^{(1-u)/h} \int_{-u/h}^{(1-u)/h} (x \wedge t)r_p(x)r_p(t)^\top K(x)K(t)\, dxdt \nonumber\\ 
         S_{p,h,u} &=& (\mu_{i+l,h,u})_{\substack{\scriptstyle i=0,\dots,p\\ \scriptstyle l = 0,\dots,p}}  \mbox{ with }   \mu_{j,h,u} = \int_{-u/h}^{(1-u)/h} x^jK(x)\, dx. \label{eq: Definition von S_{p,h,u}}
    \end{eqnarray}
    Further define
     \begin{eqnarray}
   \label{eq: Definition von c_{p,h,u}}
                           c_{p,h,u} &=& \int_{-u/h}^{(1-u)/h} r_p(x)x^{p+1}K(x)\, dx .
    \end{eqnarray}
      Because $K$ has support $[-1,1]$ for the integrals one has to consider the intersection $[-1,1]\cap [-u/h,(1-u)/h] $, which converges for $h\to 0$ to the interval $I$ defined in Theorem \ref{Satz: Konvergenz der Schätzer} in the interior and boundary cases. Thus $\Gamma_{p,h,u}$,  $S_{p,h,u}$ and $c_{p,h,u}$ converge to $\Gamma_{p}$,  $S_{p}$ and $c_{p}$, respectively, defined in Theorem \ref{Satz: Konvergenz der Schätzer}, and by Slutsky's lemma we only need to prove that 
   \begin{eqnarray*}
    \label{Gleichung: Konvergenz Ableitungen von Q}
        \sqrt{\frac{nh^{2v-1}}{\mathcal{V}_{p,v,h}(u)}}\left(\hat{Q}_p^{(v)}(u)-Q^{(v)}(u)-h^{p+1-v}\mathcal{B}_{p,v}(u)\right) \stackrel{d}{\longrightarrow} \mathcal{N}(0,1)
    \end{eqnarray*}
    for $v\in\{1,\dots,p\}$, and
    \begin{eqnarray*}
        \label{Gleichung: Konvergenz Q}
        \sqrt{\frac{n}{\mathcal{V}_{p,0,h}(u)}}\left(\hat{Q}^{(0)}_p(u)-Q(u)-h^{p+1}\mathcal{B}_{p,0}(u)\right)  \stackrel{d}{\longrightarrow} \mathcal{N}(0,1)
    \end{eqnarray*}
    for $v=0$, where $\mathcal{V}_{p,v,h}$ is defined in (\ref{Vpvh}) and $\mathcal{B}_{p,v}(u)$ in Theorem \ref{Satz: Konvergenz der Schätzer}. Note that in the interior case $v=0$ the bias term can be ignored because of the bandwidth assumption \ref{Annahme: Konvergenzen der Bandbreite}. 
}

We use the notation $Q_n=F_n^{-1}$ for the empirical quantile function based on $X_1,\dots,X_n$ and from Remark \ref{matrizendarstellung} the matrix notations $D_{ {p,u}}$ and $K_{h,u}$. Further let  $D_{ {p},h,u} = [((i/n-u)/h)^j]_{1\leq i\leq n,0\leq j \leq p}$ and $H_{ {p}}$ the diagonal matrix with entries $1,h,\dots,h^p$.  {Note that those matrices also depend on the sample size $n$, but this is ignored in the notation.} Then it holds that $D^\top_{ {p},h,u} = H_{ {p}}^{-1}D_{ {p},u}^\top $,
and for $\beta_p(u)  = (Q^{(v)}(u)/v!)_{v=0,\dots,p}$, we obtain for the estimator defined in (\ref{Gleichung: Definition von beta über arg min mit i/n}) 
\begin{eqnarray}\label{hat-beta}
    \hat{\beta}_p(u)-\beta_p(u) = H_{ {p}}^{-1}\left(\frac{1}{n}D_{ {p},h,u}^\top K_{h,u}D_{ {p},h,u}\right)^{-1}\frac{1}{n}D_{ {p},h,u}^\top K_{h,u}\left(X_{(\cdot)}-D_{ {p},u}\beta_p(u)\right).
\end{eqnarray}
For the inverse matrix Lemma \ref{Lemma: Lemma 1} below gives us the asymptotic term. For  the numerator using $X_{(i)} = Q_n(i/n)$ we obtain the expansion 
\begin{eqnarray*}
    \frac{1}{n}D_{ {p},h,u}^\top K_{h,u}(X_{ {(\cdot)}}-D_{ {p,u}}\beta_p(u)) &=& \frac{1}{n}\sum_{i=1}^n r_p\left(\frac{i/n-u}{h}\right)\left(X_{(i)}-r_p(i/n-u)^\top \beta_p(u)\right)K_h(i/n-u) \nonumber\\
    &=&  \hat{A}_n(u) + \hat B_n(u) + \hat C_n(u)
\end{eqnarray*}    
    with the terms
 \begin{eqnarray}   
 \hat A_n(u) &=&   \frac{1}{n}\sum_{i=1}^n  r_p\left(\frac{i/n-u}{h}\right)(Q_n(i/n)-Q(i/n))K_h(i/n-u)\nonumber \\
    &&{}- \int_0^1 r_p\left(\frac{t-u}{h}\right)(Q_n(t)-Q(t))K_h(t-u)dt\label{termA}\\
   \hat B_n(u) &=& \frac{1}{n}\sum_{i=1}^n r_p\left(\frac{i/n-u}{h}\right)\left(Q(i/n)-r_p(i/n-u)^\top \beta_p(u)\right)K_h(i/n-u)\label{termB}\\
   \hat C_n(u) &=& \int_0^1 r_p\left(\frac{t-u}{h}\right)(Q_n(t)-Q(t))K_h(t-u)dt\label{termC}
\end{eqnarray}
treated in Lemmas \ref{Lemma: Lemma 2}--\ref{Lemma: Lemma 4} below.

\begin{lemma}
    \label{Lemma: Lemma 1}
    Assume assumption \ref{Annahme an den Kern} and let further $h \rightarrow 0$ and  $nh \rightarrow \infty$. We have
    \begin{eqnarray*}
        \frac{1}{n}D_{ {p},h,u}^\top K_{h,u}D_{ {p},h,u} = S_{ {p,h,u}} + O\left(\frac{1}{nh}\right) 
    \end{eqnarray*}
    with  {$S_{p,h,u}$ defined as in \eqref{eq: Definition von S_{p,h,u}}.}
\end{lemma}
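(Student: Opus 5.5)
The matrix on the left has $(j,l)$ entry (indices $0\le j,l\le p$)
\[
\frac1n\sum_{i=1}^n \Big(\frac{i/n-u}{h}\Big)^{j+l} K_h(i/n-u) = \frac{1}{nh}\sum_{i=1}^n g_{j+l}\Big(\frac{i/n-u}{h}\Big),\qquad g_m(x)=x^mK(x).
\]
So the plan is to compare this Riemann sum with the integral $\mu_{m,h,u}=\int_{-u/h}^{(1-u)/h} g_m(x)\,dx$, separately for each $m=j+l\in\{0,\dots,2p\}$. Since the matrix has fixed dimension, entrywise $O(1/(nh))$ bounds give the claim.

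First substitute $x=(t-u)/h$, which gives $\mu_{m,h,u}=\frac1h\int_0^1 g_m((t-u)/h)\,dt$. Then split $[0,1]$ into the cells $((i-1)/n,i/n]$ and write the difference as
\[
\frac1h\sum_{i=1}^n\int_{(i-1)/n}^{i/n}\Big[g_m\Big(\frac{i/n-u}{h}\Big)-g_m\Big(\frac{t-u}{h}\Big)\Big]dt .
\]
By Assumption \ref{Annahme an den Kern}, $K$ is bounded, Lipschitz and supported on $[-1,1]$. Hence $g_m$ is Lipschitz on $\mathbb{R}$ with a constant $L_m$ depending only on $m$ and $K$: on $[-1,1]$ it is a product of bounded Lipschitz functions, and it vanishes outside, continuously because $K(\pm1)=0$ by continuity.

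For $t$ in the $i$-th cell, the integrand is therefore at most $L_m|i/n-t|/h\le L_m/(nh)$. It is nonzero only if one of $(i/n-u)/h$ or $(t-u)/h$ lies in $[-1,1]$, which happens for at most $2nh+2$ indices $i$. Each such cell contributes at most $\frac1h\cdot\frac1n\cdot\frac{L_m}{nh}$, so the total is $O\big((nh+1)/(n^2h^2)\big)=O(1/(nh))$ as $nh\to\infty$.

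These bounds are uniform in $u\in[0,1]$, so they cover the interior and the boundary cases $u=ch$, $u=1-ch$ at once; the truncation of the integral to $[-u/h,(1-u)/h]$ is already built into $\mu_{m,h,u}$ through the substitution. The only point needing care is the Lipschitz continuity of $g_m$ across $\pm1$, which uses the continuity of $K$. With that settled, the argument is routine.
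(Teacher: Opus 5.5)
Your argument is correct and is essentially the paper's proof: both compare each entry $\frac{1}{nh}\sum_i g_m((i/n-u)/h)$ with $\mu_{m,h,u}$ as a Riemann sum, using Lipschitz continuity of $x^mK(x)$, and you simply make the cell counting and the uniformity in $u$ explicit. One caveat: the paper invokes Lipschitz continuity only on $[-1,1]$, so it does not rely on $K(\pm1)=0$, whereas your global Lipschitz step does; if you drop that assumption the bound still holds, because only $O(1)$ cells straddle $u\pm h$ and each contributes at most $2\sup_{|x|\le 1}|x^mK(x)|/(nh)$.
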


\begin{proof}
  A generic element of $\frac{1}{n}D_{ {p},h,u}^\top K_{h,u}D_{ {p},h,u} $ takes the form 
    \begin{eqnarray*}
        \frac{1}{n}\sum_{i=1}^n \left(\frac{\frac{i}{n}-u}{h}\right)^jK_h\left(\frac{i}{n}-u\right)
    \end{eqnarray*}
    for $0\leq j \leq 2p$. This is a Riemann-sum and because $ x^jK(x)$ is Lipschitz-continuous on $[-1,1]$
 it is equal to
    \begin{eqnarray*}
       \int_0^1 \left(\frac{t-u}{h}\right)^j\frac{1}{h}K\left(\frac{t-u}{h}\right)dt + O\left(\frac{1}{nh}\right)
        &=& \int_{-u/h}^{(1-u)/h} s^jK(s)ds +  O\left(\frac{1}{nh}\right) \\
        &=&  {\mu_{j,h,u}} + O\left(\frac{1}{nh}\right)
    \end{eqnarray*}
     {with $\mu_{j,h,u}$ defined in (\ref{eq: Definition von S_{p,h,u}}). }
 \end{proof}

In the next step we consider $\hat B_n(u)$ from (\ref{termB}), which will turn out to be the leading bias term.

\begin{lemma}
\label{Lemma: Lemma 2}
    Let $u$ be either in the interior or in the boundary. Also let the assumptions \ref{Annahme an den Kern} and \ref{Annahme an die Quantilsfunktion} hold, and $h\rightarrow 0$, $nh \rightarrow \infty$. Then,  
    \begin{eqnarray*}
        \hat B_n(u) = 
            c_{ {p,h,u}} h^{p+1} \frac{Q^{(p+1)}(u)}{(p+1)!} +  o(h^{p+1}) 
    \end{eqnarray*}
with $ c_{ {p,h,u}}$ as in  {\eqref{eq: Definition von c_{p,h,u}}}. 
\end{lemma}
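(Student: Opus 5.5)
The plan is a deterministic Taylor expansion of $Q$ around $u$, followed by a Riemann-sum approximation of the remaining weighted sum as in the proof of Lemma \ref{Lemma: Lemma 1}.

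First I fix the relevant indices. Since $K$ is supported on $[-1,1]$, only indices with $|i/n-u|\le h$ contribute to $\hat B_n(u)$. For $n$ large these design points lie in $[u-\delta,u+\delta]\cap[0,1]$, or in $[0,\delta]$ respectively $[1-\delta,1]$ in the boundary cases. On that set Assumption \ref{Annahme an die Quantilsfunktion} gives $Q\in C^{p+1}$. Recall $\beta_p(u)=(Q^{(j)}(u)/j!)_{j=0,\dots,p}$. Taylor's theorem with Lagrange remainder then gives
\begin{eqnarray*}
Q(i/n)-r_p(i/n-u)^\top\beta_p(u)=\frac{Q^{(p+1)}(\xi_i)}{(p+1)!}(i/n-u)^{p+1}
\end{eqnarray*}
for some $\xi_i$ between $u$ and $i/n$. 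In the boundary case $u=ch$ the point $u$ may equal $0$. There I use the one-sided derivatives $Q^{(j)}(0+)$, which are finite by assumption, and Taylor's theorem on $[0,\delta]$ with $Q$ extended continuously at $0$.

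Next I replace $Q^{(p+1)}(\xi_i)$ by $Q^{(p+1)}(u)$. Since $|\xi_i-u|\le h\to0$, uniform continuity of $Q^{(p+1)}$ on the compact neighbourhood gives
\begin{eqnarray*}
\max_{i:|i/n-u|\le h}|Q^{(p+1)}(\xi_i)-Q^{(p+1)}(u)|=o(1).
\end{eqnarray*}
Writing $(i/n-u)^{p+1}=h^{p+1}((i/n-u)/h)^{p+1}$, I obtain
\begin{eqnarray*}
\hat B_n(u)=h^{p+1}\frac{Q^{(p+1)}(u)}{(p+1)!}\,\frac1n\sum_{i=1}^n r_p\Big(\frac{i/n-u}{h}\Big)\Big(\frac{i/n-u}{h}\Big)^{p+1}K_h(i/n-u)+R_n .
\end{eqnarray*}
The remainder is bounded componentwise by
\begin{eqnarray*}
|R_n|\le h^{p+1}\,o(1)\,\frac1n\sum_{i}|x_i|^{j+p+1}K_h(i/n-u),\qquad x_i=(i/n-u)/h .
\end{eqnarray*}
Since $|x_i|\le1$ on the support of $K$, this sum is at most $\frac1n\sum_iK_h(i/n-u)=\mu_{0,h,u}+O(1/(nh))=O(1)$ by Lemma \ref{Lemma: Lemma 1}. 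Hence $R_n=o(h^{p+1})$.

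Finally, each component of the remaining sum has the form $\frac1n\sum_i x_i^{j+p+1}K_h(i/n-u)$ with $j\le p$. The function $x^{j+p+1}K(x)$ is Lipschitz on $[-1,1]$, so the Riemann-sum argument of Lemma \ref{Lemma: Lemma 1} applies verbatim and gives
\begin{eqnarray*}
\int_{-u/h}^{(1-u)/h}x^{j+p+1}K(x)\,dx+O\Big(\frac1{nh}\Big).
\end{eqnarray*}
The integral is exactly the corresponding component of $c_{p,h,u}$, and the $O(1/(nh))$ term is $o(1)$ because $nh\to\infty$. Multiplying by $h^{p+1}Q^{(p+1)}(u)/(p+1)!$ yields the claim.

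The only delicate point is keeping the approximation uniform over the boundary cases. This comes down to two facts: $u$ (or $\xi_i$) stays inside the region where $Q\in C^{p+1}$, which uses the right and left limits at $0$ and $1$; and the Riemann-sum error is uniform in $u$ because $K$ is Lipschitz and the truncation at $t\in[0,1]$ costs only $O(1/(nh))$. Both are routine.
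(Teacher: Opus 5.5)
Your proposal is correct and follows essentially the same route as the paper: a Taylor expansion of $Q$ to order $p+1$ around $u$, control of the remainder through continuity of $Q^{(p+1)}$ (the paper's bound $\tilde R^n(u)$ is your uniform-continuity modulus), and a Riemann-sum approximation of the kernel-weighted moments as in Lemma \ref{Lemma: Lemma 1}. Your explicit handling of the boundary cases, via one-sided derivatives and uniformity along $u=u_n$, is slightly more careful than the paper's, but the argument is the same in substance.
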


\begin{proof}
In the formula for $\hat B_n(u)$ from (\ref{termB}) due to the support of $K$ we only need to consider the summands where $i/n \in [u-h,u+h]$ (or $i/n \in [0,u+h]$ or $\in [u-h,1]$ in the respective boundary cases). By a Taylor-expansion up to order $p+1$ we get for the relevant $i$:
         \begin{eqnarray*}
        Q(i/n) &=&
        \sum_{j=0}^p (i/n-u)^j\frac{Q^{(j)}(u)}{j!} + (i/n-u)^{p+1}\frac{Q^{(p+1)}(u)}{(p+1)!}
         + (i/n-u)^{p+1} R_{p+1}^n(i,u),
    \end{eqnarray*}
    where the remainder term can be upper bounded  
    $$\max_{i}|R_{p+1}^n(i,u)|\leq \Tilde R^n(u)=\frac{\sup_{t: |t-u|\leq h} |Q^{(p+1)}(t)-Q^{(p+1)}(u)|}{(p+1)!}. $$
    Note that $r_p(i/n-u)^\top \beta_p(u) = \sum_{j=0}^p (i/n-u)^jQ^{(j)}(u)/j!$ and thus the $k$-th entry of $\hat B_n(u)$ becomes 
    \begin{eqnarray*}
        && \frac{1}{n}\sum_{i=1}^n \left(\frac{i/n-u}{h}\right)^k\left(((i/n-u)^{p+1}\frac{Q^{(p+1)}(u)}{(p+1)!}+(i/n-u)^{p+1}R_{p+1}^n(i)\right)K_h(i/n-u) \\
        &=& \frac{Q^{(p+1)}(u)}{(p+1)!} \frac{1}{n}\sum_{i=1}^n \left(\frac{i/n-u}{h}\right)^k(i/n-u)^{p+1}K_h(i/n-u) \\
        &+& \frac{1}{n}\sum_{i=1}^n \left(\frac{i/n-u}{h}\right)^kK_h(i/n-u)(i/n-u)^{p+1}R_{p+1}^n(i,u)\\
        &=& \frac{Q^{(p+1)}(u)}{(p+1)!}h^{p+1} \left(\int_{-u/h}^{(1-u)/h}x^{p+1}x^kK(x)dx + O\left(\frac{1}{nh}\right)\right) + o(h^{p+1})
    \end{eqnarray*}
    by Riemann sum approximation and as we can bound the remainder term 
    \begin{eqnarray*}
        &&\left|\frac{1}{n}\sum_{i=1}^n \left(\frac{i/n-u}{h}\right)^kK_h(i/n-u)(i/n-u)^{p+1}R_{p+1}^n(i,u)\right|\\
        &\leq& h^{p+1}\Tilde R^n(u) \frac{1}{n}\sum_{i=1}^n \left(\frac{|i/n-u|}{h}\right)^kK_h(i/n-u) = o(h^{p+1})
    \end{eqnarray*}
    as the sum converges to a finite integral and $\Tilde R^n(u) \rightarrow 0$ due to the continuity of $Q^{(p+1)}$ in $u$. Therefore we get that
    \begin{eqnarray*}
        \hat B_n(u) &=& h^{p+1} \frac{Q^{(p+1)}(u)}{(p+1)!} \int_{-u/h}^{(1-u)/h} r_p(x)x^{p+1}K(x)dx + o(h^{p+1})
    \end{eqnarray*}
\end{proof}

The dominating term for the asymptotic distribution is $\hat C_n(u)$ from (\ref{termC}) considered in the next lemma. 

\begin{lemma}
    \label{Lemma: Lemma 3}
    Assume the assumptions \ref{Annahme an den Kern}, \ref{Annahme an die Quantilsfunktion} and \ref{Annahme: Konvergenzen der Bandbreite}. Define the $(p+1)\times (p+1)$ scaling matrix
    \begin{eqnarray*}
        N_{ {h,u}} = \begin{dcases}
            diag(1,h^{-1/2},h^{-1/2},\dots,h^{-1/2}), & u\,\, \text{interior}\\
            diag(h^{-1/2},h^{-1/2},\dots,h^{-1/2}), & u \,\, \text{boundary.} 
        \end{dcases}
    \end{eqnarray*}
      Then
    \begin{eqnarray*}
        \sqrt{n}N_{ {h,u}}S_{ {p,h,u}}^{-1}\hat C_n(u) \stackrel{d}{\rightarrow} \mathcal{N}_{p+1}(0,\Sigma_{p,u}),
    \end{eqnarray*}
    where
    \begin{eqnarray*}
        \Sigma_{p,u} = \begin{dcases}
            q(u)^2(u-u^2)e_0e_0^\top  + q(u)^2(I-e_0e_0^\top )S_{ {p}}^{-1}\Gamma_{ {p}}S_{ {p}}^{-1}(I-e_0e_0^\top ), & u\,\text{interior}\\
            q(0+)^2(ce_0e_0^\top +S_{ {p}}^{-1}\Gamma_{ {p}}S_{ {p}}^{-1}) ,& u \, \text{lower}\, (u = ch)\\
            q(1-)^2(ce_0e_0^\top +S_{ {p}}^{-1}\Gamma_{ {p}}S_{ {p}}^{-1}-(e_1e_0^\top +e_0e_1^\top )), & u \,\text{upper}\, (u = 1-ch)
        \end{dcases}
    \end{eqnarray*}
       with  {$S_{p}$ and $\Gamma_p$ from Theorem \ref{Satz: Konvergenz der Schätzer}}.
\end{lemma}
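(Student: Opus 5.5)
The plan is to reduce $\hat C_n(u)$ to a linear functional of a Brownian bridge, using strong approximation of the uniform quantile process, and then compute the covariance of that Gaussian functional explicitly. Write $X_{(i)}=Q(U_{(i)})$ with $U_1,\dots,U_n$ iid uniform, so that $Q_n(t)=Q(G_n^{-1}(t))$, where $G_n^{-1}$ is the uniform empirical quantile function. Because $K$ has support $[-1,1]$, only $t\in[u-h,u+h]\cap[0,1]$ enters $\hat C_n(u)$. By the classical law of the iterated logarithm, $\sup_t|G_n^{-1}(t)-t|=O(\sqrt{\log\log n/n})$ a.s., so $G_n^{-1}(t)$ stays in the neighbourhood where $Q$ is smooth by Assumption \ref{Annahme an die Quantilsfunktion}; in the boundary cases this is $[0,\delta]$ or $[1-\delta,1]$. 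A Taylor expansion then gives
\[
Q_n(t)-Q(t)=q(t)\bigl(G_n^{-1}(t)-t\bigr)+O_P\Bigl(\frac{\log\log n}{n}\Bigr)
\]
uniformly on the window. After multiplication by $\sqrt{n}N_{h,u}$, whose entries are at most $h^{-1/2}$, the remainder contributes $O(\log\log n/\sqrt{nh})=o_P(1)$, since $\int|r_p((t-u)/h)|K_h(t-u)\,dt=O(1)$.

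Next I apply the Koml\'os--Major--Tusn\'ady / Cs\"org\H{o}--R\'ev\'esz strong approximation. On a suitable probability space, $\sqrt{n}(G_n^{-1}(t)-t)=B_n(t)+O(n^{-1/2}\log n)$ uniformly a.s., with Brownian bridges $B_n$. The error contributes $O(h^{-1/2}n^{-1/2}\log n)$, which tends to zero by Assumption \ref{Annahme: Konvergenzen der Bandbreite}. If only a Bahadur--Kiefer type rate $n^{-1/4}(\log n)^{1/2}(\log\log n)^{1/4}$ is available, the same assumption $nh^2/(\log^2 n\log\log n)\to\infty$ is exactly what makes that error vanish after scaling by $h^{-1/2}$, so I would fall back on that. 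Continuity of $q$ at $u$ (or at $0+$, $1-$) allows $q(t)$ to be replaced by $q(u)$, with error $o(1)\cdot\sup|B_n|=o_P(1)$. It therefore remains to show that
\[
Z_n=q(u)\,N_{h,u}S_{p,h,u}^{-1}\int_0^1 r_p\Bigl(\frac{t-u}{h}\Bigr)B_n(t)K_h(t-u)\,dt
\]
converges in distribution to $\mathcal N(0,\Sigma_{p,u})$. Since $Z_n$ is exactly Gaussian with mean zero, only its covariance needs to converge.

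For the covariance, substitute $t=u+hx$ and $s=u+hy$ and use
\[
(s\wedge t)-st=(u-u^2)+h\bigl(x\wedge y-u(x+y)\bigr)-h^2xy .
\]
This gives
\[
\mathrm{Cov}=q(u)^2\Bigl[(u-u^2)m_0m_0^\top+h\Gamma_{p,h,u}-uh\,(m_1m_0^\top+m_0m_1^\top)-h^2m_1m_1^\top\Bigr],
\]
where $m_j=\int x^jr_p(x)K(x)\,dx$ over $[-u/h,(1-u)/h]$. The key observation is that $m_0$ and $m_1$ are the first two columns of $S_{p,h,u}$, so $S_{p,h,u}^{-1}m_0=e_0$ and $S_{p,h,u}^{-1}m_1=e_1$. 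Hence
\[
S^{-1}\,\mathrm{Cov}\,S^{-1}=q(u)^2\Bigl[(u-u^2)e_0e_0^\top+hS^{-1}\Gamma S^{-1}-uh(e_1e_0^\top+e_0e_1^\top)-h^2e_1e_1^\top\Bigr].
\]
Applying $N_{h,u}$ then gives the three cases.

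\emph{Interior:} the $(0,0)$ entry is $u-u^2+O(h)$. The remaining block is $S^{-1}\Gamma S^{-1}$ up to $O(u)+O(h)$ contributions; the $-uh(\cdot)$ term only touches the row and column $e_0$, so it disappears from this block. The cross terms between the first coordinate and the others carry a factor $h^{-1/2}\cdot h\to0$.

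\emph{Lower boundary $u=ch$:} everything is scaled by $h^{-1}$. Then $(u-u^2)/h\to c$, and $u\to0$ kills the mixed term, leaving $ce_0e_0^\top+S_p^{-1}\Gamma_pS_p^{-1}$.

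\emph{Upper boundary $u=1-ch$:} again $(u-u^2)/h\to c$, but now $u\to1$, so the mixed term survives as $-(e_1e_0^\top+e_0e_1^\top)$.

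In each case $S_{p,h,u}\to S_p$ and $\Gamma_{p,h,u}\to\Gamma_p$, which yields $\Sigma_{p,u}$.

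The main obstacle is the approximation step: making the strong approximation rate compatible with the $h^{-1/2}$ blow-up in $N_{h,u}$, and controlling the Taylor remainder uniformly near a boundary where $Q$ is smooth only one-sidedly. The covariance algebra is routine once the identities $S^{-1}m_0=e_0$ and $S^{-1}m_1=e_1$ are used.
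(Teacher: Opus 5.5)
Your proof is correct, but it reaches the Gaussian limit by a different mechanism than the paper. Both proofs start with the same Taylor step $Q_n(t)-Q(t)\approx q(t)(\bar Q_n(t)-t)$.

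\emph{The paper's route.} It applies the Bahadur--Kiefer representation (Theorem E of Cs\"org\H{o} and R\'ev\'esz) to replace $\bar Q_n(t)-t$ by $t-\bar F_n(t)$. The leading term $\tilde C_n(u)$ is then an average of i.i.d.\ centred vectors. The paper computes their covariance, keeping $q(u+xh)$ and expanding it, and checks a fourth-moment Lyapunov condition of order $O(1/(nh^2))$.

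\emph{Your route.} You couple the uniform quantile process with Brownian bridges at rate $n^{-1/2}\log n$, so the leading term is exactly Gaussian and no CLT is needed. Your global LIL bound on $\sup_{t}|G_n^{-1}(t)-t|$ also makes the paper's splitting of the integral at $1/(n+1)$ in the boundary case unnecessary.

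The covariance algebra coincides, because the i.i.d.\ summands have the same kernel $s\wedge t-st$ as the bridge. Your identities $S_{p,h,u}^{-1}m_0=e_0$ and $S_{p,h,u}^{-1}m_1=e_1$ are precisely the ``simple calculations'' the paper leaves implicit.

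What your route buys is a weaker bandwidth condition for this lemma. After scaling, the coupling error is $\log n/\sqrt{nh}$, so $nh/\log^2 n\to\infty$ suffices. In the paper it is the Bahadur--Kiefer remainder $n^{-3/4}(\log n)^{1/2}(\log\log n)^{1/4}$ that forces $nh^2/(\log^2 n\,\log\log n)\to\infty$ in Assumption \ref{Annahme: Konvergenzen der Bandbreite}. The price is the heavier KMT machinery and a change of probability space, which is harmless for a limit in distribution. Your ``fallback'' is essentially the paper's proof, but as stated it is incomplete. Bahadur--Kiefer delivers the empirical process, not a Brownian bridge, so you would still need a CLT (or KMT for the empirical process) on top.

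One justification should be tightened: replacing $q(t)$ by $q(u)$ does not follow from continuity of $q$ alone.
\begin{itemize}
\item The rows of $N_{h,u}$ carry the factor $h^{-1/2}$, so the error is bounded by $h^{-1/2}\sup_{|t-u|\le h}|q(t)-q(u)|\,\sup_t|B_n(t)|$, not by $o(1)\cdot\sup_t|B_n(t)|$.
\item In the interior case the $O_P(1)$ value $B_n(u)$ genuinely leaks into the coordinates $v\ge 1$, through $e_v^\top S_{p,h,u}^{-1}\int r_p(x)K(x)\bigl(q(u+hx)-q(u)\bigr)\,dx$.
\item You therefore need $\sup_{|t-u|\le h}|q(t)-q(u)|=o(h^{1/2})$. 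This holds with rate $O(h)$, because under Assumption \ref{Annahme an die Quantilsfunktion} with $p\ge 1$ the function $q=Q'$ is continuously differentiable near $u$ (one-sidedly at the boundary). The resulting error is $O_P(h^{1/2})$.
\end{itemize}
The paper uses the same smoothness when it expands $q(u+xh)=q(u)+xhq'(u)+o(h)$ inside the covariance.
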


\begin{proof}
    The proof uses a combination of the proof of Theorem 2 in \cite{falk1986estimation} and Lemma 3 of \cite{cattaneo2020simple}. Remind the notation $F_n$ for the empirical cdf of $X_1,\dots,X_n$, and $Q_n=F_n^{-1}$ the corresponding quantile function. Now let $U_i=F(X_i)$, then $U_1,U_2,\dots$ are independent and uniformly distributed on $[0,1]$. Denote with $\bar{F}_n$ the empirical cdf of $U_1,\dots,U_n$, and $\bar Q_n=\bar F_n^{-1}$ the corresponding quantile function. Then we have that 
    \begin{eqnarray*}
        Q_n(t) = Q(\bar Q_n(t))\text{ for } t \in (0,1).
    \end{eqnarray*}
    With that we now obtain for $\hat C_n(u)$ from (\ref{termC}) that
    \begin{eqnarray}\nonumber
        \hat C_n(u) &=& \int_0^1 r_p\left(\frac{t-u}{h}\right)(Q(\bar Q_n(t))-Q(t))K_h(t-u)dt\\
        &=& \int_{-u/h}^{(1-u)/h}r_p(x)(Q(\bar Q_n(u+xh))-Q(u+xh))K(x)dx\nonumber\\
        &=& \int_{-u/h}^{(1-u)/h}r_p(x)K(x)q(u+xh)(\bar Q_n(u+xh)-u-xh)dx + R_n(u).\label{dominatingC}
    \end{eqnarray}
    The last step is a Taylor-expansion up to order two and due to the boundedness of $q'$ in a neighborhood of $u$ the remainder term can be upper bounded by 
    \begin{eqnarray*}
    |R_n(u)|&\leq&   d \int_{-u/h}^{(1-u)/h} K(x)(\bar{Q}_n(u+xh)-(u+xh))^2dx
    \end{eqnarray*}
for some constant $d$. For uniformly distributed random variables it holds that
    \begin{eqnarray}
    \label{Gleichung für Taylor (glm Konv von emp Quantilprozess}
        \sup_{\frac{1}{n+1}\leq s \leq \frac{n}{n+1}} |\bar{Q}_n(s)-s| = O_P(n^{-1/2}),
    \end{eqnarray}
    see section 5 of \cite{csorgHo1985asymptotic},
    based on \cite{OReilly}.
    If $u$ is in the interior we effectively integrate over $[-1,1]$ for $n$ large enough, due to the kernel. So for $n$ large enough, we have $|\bar{Q}_n(s)-s|=O_P(n^{-1/2})$ uniformly in $s\in [u-h,u+h] \subseteq [\frac{1}{n+1},\frac{n}{n+1}]$, which then directly implies $R_n(u)=O_P(1/n)$.
    For $u = ch$ we have to be a bit more careful, as for $n$ large enough we now effectively integrate over $[-c,1]$. This implies we have $|\bar{Q}_n(s)-s|$ for $s\in [0,u+h]$.
    In order to still apply \cite{csorgHo1985asymptotic} we have to split the integral. For this let $l(u,n)$ be the largest $x\in [-c,1]$, such that $u+xh \leq \frac{1}{n+1}$. As the function $u+xh$ is continuous and strictly increasing in $x$, we get $u+l(u,n)h = \frac{1}{n+1}$ and $u+xh \geq \frac{1}{n+1}$ for $x \geq l(u,n)$. Also, since $nh \rightarrow \infty$ implies, that $h \geq \frac{a}{n}$ for any constant $a>0$ and $n$ large enough, we get that $u+h = h(c+1) \geq \frac{1}{n}\geq \frac{1}{n+1}$ for $n$ large enough. Therefore we can split $[0,u+h] = [0,\frac{1}{n+1})\cup[\frac{1}{n+1},u+h] $. Which implies, that $l(u,n) \geq -c$. Therefore we get
    \begin{eqnarray*}
        && \int_{-u/h}^{(1-u)/h} K(x)(\bar{Q}_n(u+xh)-(u+xh))^2dx \\
        &=& \int_{-c}^{l(u,n)}K(x)(\bar{Q}_n(u+xh)-(u+xh))^2dx + \int_{l(u,n)}^1K(x)(\bar{Q}_n(u+xh)-(u+xh))^2dx.
    \end{eqnarray*}
    For the second integral, we can use the same arguments as in the interior case to obtain $O_P(n^{-1})$. For the first integral we use boundedness of $K$ and substitution ($t = u+xh$) to obtain     
    for some constant $\tilde d$ the upper bound 
    \begin{eqnarray*}
        \frac{\tilde d}{h}\int_0^{1/(n+1)} (\bar{Q}_n(t)-t)^2dt 
   &\leq& \frac{2\tilde d}{h}\int_0^{1/(n+1)} \left(\frac{1}{n+1}-t\right)^2dt + \Delta_n^2\frac{2\tilde d}{h}\int_0^{1/(n+1)}dt\\
  &=& O(\frac{1}{n^3h})+O_P(\frac{1}{n^2 h}) =O_P(\frac{1}{n}) ,
    \end{eqnarray*}
    where we have applied $\bar{Q}_n(t) = \bar{Q}_n(\frac{1}{n+1}) = U_{(1)}$ and the notation $\Delta_n = \bar{Q}_n\left(\frac{1}{n+1}\right)-\frac{1}{n+1} = O_P(n^{-1/2})$, where the last step is again due to \cite{csorgHo1985asymptotic}.  {Here $U_{(1)}<\dots<U_{(n)}$ (with probability one) are the order statistics of $U_1,\dots,U_n$.}    
         For $u$ in the upper boundary we can make the same arguments, while using that $\bar{Q}_n(t) = \bar{Q}_n(\frac{n}{n+1}) = U_{(n-1)}$ for all $t \in [\frac{n}{n+1},1)$. Thus we also obtain in the boundary cases that $R_n(u)=O_P(1/n)$. 
    Now we do a Bahadur-approximation for the dominating term in (\ref{dominatingC}), 
    \begin{eqnarray}
    \label{Gleichung: Lemma 3: Bahadur-Schritt 1}
       \hat C_n(u) &=& \int_{-u/h}^{(1-u)/h}r_p(x)K(x)q(u+xh)(u+xh-\bar{F}_n(u+xh))dx \\
        &&{}+ \int_{-u/h}^{(1-u)/h}r_p(x)K(x)q(u+xh)\big(\bar{F}_n^{-1}(u+xh) \nonumber\\
        &&\qquad\qquad{}-u-xh - (u+xh-\bar{F}_n(u+xh))\big)dx+  O_P(n^{-1}). \nonumber
    \end{eqnarray}
      Using  the boundedness of $r,K$ and of $q$ in a neighborhood of $u$, 
  we obtain the upper bound of the second term on the right hand side of (\ref{Gleichung: Lemma 3: Bahadur-Schritt 1})
    \begin{eqnarray}
    \sup_{0\leq t\leq 1}|\bar{F}_n^{-1}(t)-t - (t-\bar{F}_n(t))| O(1)
    = O_P\left(\frac{\log(n)^{1/2}\log\log(n)^{1/4}}{n^{3/4}}\right)\label{CsörgöRevesz}
    \end{eqnarray}
     by Theorem E of \cite{csorgHo1978strong}. We obtain the dominating term 
    \begin{eqnarray*}
        \hat C_n(u) = \tilde C_n(u)  + O_P\left(\frac{\log(n)^{1/2}\log\log(n)^{1/4}}{n^{3/4}}\right)
    \end{eqnarray*}
    with 
  \begin{eqnarray}\nonumber
        \tilde {C}_n(u)
        &=&  \int_{-u/h}^{(1-u)/h}r_p(x)K(x)q(u+xh)(u+xh-\bar{F}_n(u+xh))dx\\
        &=& \frac{1}{n}\sum_{i=1}^n \int_{-u/h}^{(1-u)/h}r_p(x)K(x)q(u+xh)(u+xh-\mathds{1}_{U_i\leq u+xh})dx, 
        \label{C_n-entwicklung}
    \end{eqnarray}
    where $\mathds{1}_A$ denotes the indicator of an event $A$. 
       This expression is now comparable to the one in the proof of Lemma 3 in \cite{cattaneo2020simple} with the additional factor $q(u+xh)$  and the special case of a uniform distribution. The remainder term  multiplied by $\sqrt{n}N_{ {h,u}}S_{ {p,h,u}}^{-1}$ converges to zero since the matrices give an additional $\sqrt{nh^{-1}}$ term. The dominating term $\tilde C_n(u)$ is a sum of independent centered random variables, and we will apply the triangular array version of the central limit theorem with the Lyapunov-condition to show asymptotic normality. For this we calculate the  covariance matrix first
        (the integral limits in the following calculations are always $-u/h$ and $(1-u)/h$ as long as not stated otherwise), 
    \begin{eqnarray*}
        && \label{Gleichung: Varianz in Lemma 3 Teil 1}
        Cov\left(\int r_p(x)K(x)q(u+xh)(u+xh-\mathds{1}_{U_i\leq u+xh})\,dx\right) \\
        &=& \iint r_p(x)r_p(y)^\top K(x)K(y)q(u+xh)q(u+yh)\\
        &&\qquad{}E\left[\left(\mathds{1}_{U_i\leq u+xh}-u-xh\right)\left(\mathds{1}_{U_i\leq u+yh}-u-yh\right)\right]dxdy\\
        &=& \iint r_p(x)r_p(y)^\top K(x)K(y)q(u+xh)q(u+yh)\\
        &&\qquad{} (u+(x\wedge y)h-u^2-uyh-uxh-xyh^2)dxdy.
    \end{eqnarray*}
 For the next step we apply Taylor-expansion  $q(u+xh) = q(u)+xhq'(u)+o(h)$ for interior $u$  with simple calculations using the definitions of $e_0$, $e_1$, $S_{ {p,h,u}}$ and $\Gamma_{ {p,h,u}}$ to obtain 
     \begin{eqnarray*}
        && Cov\left(\int r_p(x)K(x)q(u+xh)(u+xh-\mathds{1}_{U_i\leq u+xh})dx\right) \nonumber\\
        &=& q(u)^2(u-u^2)S_{ {p,h,u}}e_0e_0^\top S_{ {p,h,u}} + hq(u)^2\Gamma_{ {p,h,u}}
        -hq(u)^2uS_{ {p,h,u}}(e_1e_0^\top +e_0e_1^\top )S_{ {p,h,u}}.\nonumber 
    \end{eqnarray*}
    Next we look at $u$ in the lower boundary, so $u = ch$ for some $c \in  {[0,1)}$, to obtain 
        \begin{eqnarray*}
        && Cov\left(\int K(x)r_p(x)q(u+xh)(u+xh-\mathds{1}_{U_i\leq u+xh})dx\right)\\ &=& \iint K(x)K(y)r_p(x)r_p(y)^\top q((c+x)h)q((c+y)h)(c+(x\wedge y))h dxdy + o(h)  \\
        &=& hq(0+)^2(cS_{ {p,h,u}}e_0e_0^\top S_{ {p,h,u}}+\Gamma_{ {p,h,u}})+o(h).
    \end{eqnarray*}
    Similarly for $u$ in the upper boundary, so $u = 1-ch$, and $    q(u+xh) = q(u) - (c-x)hq'(u)+o(h)$  we obtain 
    \begin{eqnarray*}
        &&Cov\left(\int K(x)r_p(x)q(u+xh)(u+xh-\mathds{1}_{U_i\leq u+xh})dx\right)\\ &=& \iint K(x)K(y)r_p(x)r_p(y)^\top q(1-)^2(c+(x\wedge y))h dxdy \\
        &&{}- h\iint (x+y)K(x)K(y)r_p(x)r_p(y)^\top q(1-)^2dxdy + o(h) \\
        &=& hq(1-)^2(cS_{ {p,h,u}}e_0e_0^\top S_{ {p,h,u}}+\Gamma_{ {p,h,u}}- S_{ {p,h,u}}(e_1e_0^\top +e_0e_1^\top )S_{ {p,h,u}})+o(h).
    \end{eqnarray*}
    With those expressions,
    \begin{eqnarray*}
        Cov(\sqrt{n}N_{ {h,u}}S_{ {p,h,u}}^{-1}\tilde{C}_n(u)) &=& nN_{ {h,u}}S_{ {p,h,u}}^{-1}Cov(\tilde {C}_n(u))S_{ {p,h,u}}^{-1}N_{ {h,u}} 
    \end{eqnarray*}
     {and the convergence of $S_{p,h,u} \rightarrow S_p$ and $\Gamma_{p,h,u} \rightarrow \Gamma_p$ seen at the beginning of the proof section} one obtains with simple calculations  {and Slutsky's lemma} the  covariance matrix formula stated in Lemma \ref{Lemma: Lemma 3}. 

To show asymptotic normality by     Cramér-Wold it is enough, that the Lyapunov-condition for $\delta = 2$ holds for all $a\in \mathbb{R}^{p+1}$ for $a^\top \hat C_n(u)$. Let $\mathcal{A} = [-u/h,(1-u)/h]^4\subseteq \mathbb{R}^4$, then
    \begin{eqnarray*}
        &&\sum_{i=1}^n E\left[\left|\frac{1}{\sqrt{n}}a^\top N_{ {h,u}}S_{ {p,h,u}}^{-1}\int_{-u/h}^{(1-u)/h}r_p(x)K(x)(\mathds{1}_{U_i\leq u+xh}-u-xh)q(u+xh)dx\right|^4\right]\\
        &\leq& \frac{1}{n}\iiiint_{\mathcal{A}}\prod_{j=1}^4|a^\top N_{ {h,u}}S_{ {p,h,u}}^{-1}r_p(x_j)K(x_j)q(u+x_jh)|dx_1dx_2dx_3dx_4 = O\left(\frac{1}{nh^2}\right).
    \end{eqnarray*}
 From this and the covariance calculations above the asserted asymptotic normality follows. 
\end{proof}

Next we consider the  term $\hat A_n(u)$ from (\ref{termA}) which is negligible. 

\begin{lemma}
    \label{Lemma: Lemma 4}
    Under the assumptions \ref{Annahme an den Kern}, \ref{Annahme an die Quantilsfunktion}, and \ref{Annahme: Konvergenzen der Bandbreite} there is an expansion
    \begin{eqnarray*}
        \hat A_n(u) = \hat{A}_{ {1,n}}(u) + \hat{A}_{2,n}(u)
    \end{eqnarray*} 
    with
    $ E[\hat{A}_{1,n}(u)] = 0$, $Cov(\hat{A}_{1,n}(u)) = O(n^{-2}h^{-1})$, and $\hat{A}_{2,n}(u) = O_P\left(\frac{\log(n)^{1/2}\log\log(n)^{1/4}}{n^{3/4}}\right)$.
\end{lemma}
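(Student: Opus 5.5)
The plan is to mirror the treatment of $\hat C_n(u)$ in Lemma \ref{Lemma: Lemma 3}, but now for the difference between a Riemann sum and its integral.

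\textbf{Step 1: quantile transform and Taylor reduction.} Write $Q_n(t)-Q(t)=Q(\bar Q_n(t))-Q(t)$. As in (\ref{dominatingC}), expand to
\[
Q_n(t)-Q(t)=q(t)\bigl(\bar Q_n(t)-t\bigr)+O\bigl((\bar Q_n(t)-t)^2\bigr).
\]
Here the quadratic remainder is $O_P(n^{-1})$ uniformly on the relevant range, handling the boundary pieces $[0,1/(n+1))$ and $(n/(n+1),1]$ exactly as in the proof of Lemma \ref{Lemma: Lemma 3}. Its contribution to both the sum and the integral in (\ref{termA}) is then $O_P(n^{-1})$, which is absorbed into $\hat A_{2,n}(u)$.

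\textbf{Step 2: Bahadur approximation.} Replace $\bar Q_n(t)-t$ by $t-\bar F_n(t)$. By Theorem E of \cite{csorgHo1978strong} (cf.\ (\ref{CsörgöRevesz})), the error is
\[
O_P\bigl(\log(n)^{1/2}\log\log(n)^{1/4}n^{-3/4}\bigr)
\]
uniformly in $t$. Since $r_p(\cdot)K(\cdot)q$ is bounded on the support, and both $\frac1n\sum_i |K_h(i/n-u)|$ and $\int |K_h(t-u)|\,dt$ are $O(1)$ by Lemma \ref{Lemma: Lemma 1}-type Riemann arguments, this error also goes into $\hat A_{2,n}(u)$.

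\textbf{Step 3: the main term.} Define
\[
\hat A_{1,n}(u)=\frac1n\sum_{j=1}^n\Bigl[\frac1n\sum_{i=1}^n g(i/n)\bigl(i/n-\mathds 1_{U_j\le i/n}\bigr)-\int_0^1 g(t)\bigl(t-\mathds 1_{U_j\le t}\bigr)dt\Bigr],
\]
with $g(t)=r_p((t-u)/h)K_h(t-u)q(t)$. This is an average of i.i.d.\ centered vectors, because $E\,\mathds 1_{U_j\le t}=t$. Hence $E[\hat A_{1,n}(u)]=0$ and
\[
Cov(\hat A_{1,n}(u))=n^{-1}Cov(\xi_1),
\]
where $\xi_1$ is the bracket for $j=1$. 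It remains to show $E\|\xi_1\|^2=O(n^{-1}h^{-1})$.

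Split $\xi_1$ into a deterministic part and a random part.
\begin{itemize}
\item The deterministic part $\frac1n\sum g(i/n)\,i/n-\int g(t)\,t\,dt$ is $O(1/(nh))$, because $g(t)t$ is Lipschitz with constant $O(h^{-2})$ on an interval of length $O(h)$.
\item The random part is $\frac1n\sum_{i:\,i/n\ge U_1}g(i/n)-\int_{U_1}^1 g(t)\,dt$. On each cell $[(i-1)/n,i/n]$ the Riemann error is $O(n^{-2}h^{-2})$, and there are $O(nh)$ cells, giving $O(1/(nh))$. The partial cell containing $U_1$ adds $O(\|g\|_\infty/n)=O(1/(nh))$.
\end{itemize}
Both parts are thus $O(1/(nh))$ deterministically. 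The random part is also nonzero only when $U_1\le u+h$. In the interior case, moreover, the random part is only of size $O(1/(nh))$ when $U_1\in[u-h,u+h]$. When $U_1<u-h$ it equals the full Riemann error of $g$, which is $O(1/(nh))$, but by symmetry and Lipschitz continuity it is actually of order $O(n^{-1})$ after cancellation. Alternatively, the deterministic and random pieces can be combined to yield the sharper bound.

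\textbf{Main obstacle.} This variance bound is the step I expect to be hardest. The crude bound gives $E\|\xi_1\|^2=O(n^{-2}h^{-2})$ and hence $Cov=O(n^{-3}h^{-2})$. Since $nh\to\infty$, this is in fact smaller than the claimed $O(n^{-2}h^{-1})$, so the crude estimate already suffices. If it turns out not to, I would refine it using the indicator $P(U_1\in[u-h,u+h])=O(h)$, which yields $O(n^{-2}h^{-2}\cdot h)$.

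Finally, I would check carefully that in the boundary cases the terms $i=n$ and $i=1$, where $Q_n(1)=X_{(n)}$, are covered. They are, by the boundary handling already used in Lemma \ref{Lemma: Lemma 3}, with $O_P(n^{-1})$ contributions absorbed into $\hat A_{2,n}(u)$.
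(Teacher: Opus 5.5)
Your proposal is correct. Steps 1--2 match the paper: the representation $Q_n=Q\circ\bar Q_n$, the quadratic Taylor remainder of order $O_P(n^{-1})$, and the Bahadur replacement via Theorem E of \cite{csorgHo1978strong}. Your $\hat A_{1,n}(u)$ is also the paper's.

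The routes differ only in the covariance bound. The paper expands $Cov(\hat A_{1,n}(u))=\frac1n(\tilde A-2\tilde B+\tilde C)$ using the Brownian-bridge kernel $(s\wedge t)-st$. It then shows that the double sum $\tilde A$ and the mixed sum--integral $\tilde B$ each lie within $O((nh)^{-1})$ of the double integral $\tilde C$, which gives $O(n^{-2}h^{-1})$. You instead write $\hat A_{1,n}(u)=\frac1n\sum_j\xi_j$ with i.i.d.\ centred $\xi_j$. Each $\xi_j$ is the Riemann-sum error of $t\mapsto g(t)(t-\mathds{1}_{U_j\le t})$. That function has sup $O(h^{-1})$, Lipschitz constant $O(h^{-2})$ away from a single jump at $U_j$, and support of length $O(h)$. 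Hence $\|\xi_j\|=O((nh)^{-1})$ deterministically, and $Cov(\hat A_{1,n}(u))=O(n^{-3}h^{-2})=o(n^{-2}h^{-1})$.

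Your route is more elementary, since it needs no two-dimensional Riemann approximation across the kink of $s\wedge t$. It is also sharper by a factor $(nh)^{-1}$. It shows that the paper's $\tilde A-2\tilde B+\tilde C$, which up to symmetrising $\tilde B$ is exactly $E[\xi_1\xi_1^\top]$, is really $O((nh)^{-2})$. The gain does not weaken Assumption~\ref{Annahme: Konvergenzen der Bandbreite}, however. After scaling by $\sqrt{n/h}$, the binding term is the Bahadur remainder in $\hat A_{2,n}(u)$, and that remainder yields precisely the condition $nh^2/(\log(n)^2\log\log(n))\to\infty$.

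Two small clean-ups:
\begin{itemize}
\item Drop the remarks about cancellation ``by symmetry'' and about refining via $P(U_1\in[u-h,u+h])=O(h)$. They are unnecessary, and as stated they are unjustified. For $U_1<u-h$ the random part is the full Riemann error of $g$, which for a merely Lipschitz $K$ need not be $o((nh)^{-1})$. The deterministic part is present regardless of $U_1$.
\item For the summand $i=n$, use the pointwise fact $\bar Q_n(1)=U_{(n)}=1+O_P(n^{-1})$, as the paper does. The integral-based boundary argument of Lemma~\ref{Lemma: Lemma 3} does not see the single point $t=1$.
\end{itemize}
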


\begin{proof} Note that with  $\hat A_n(u)$ in (\ref{termA}) and $\hat C_n(u)$ in (\ref{termC}) one can write
$$\hat A_n(u)=\hat D_n(u)-\hat C_n(u)$$
 {where we give $\hat D_n(u)$ below} and we start with a Taylor expansion of the first term as in Lemma \ref{Lemma: Lemma 3} using $Q_n(u)=Q(\bar Q_n(u))$,
  \begin{eqnarray*}
     \hat D_n(u)   &=&\frac{1}{n}\sum_{i=1}^nr_p\left(\frac{i/n-u}{h}\right)(Q_n(i/n)-Q(i/n))K_h(i/n-u) \\
        &=& \frac{1}{n}\sum_{i=1}^nr_p\left(\frac{i/n-u}{h}\right)q(i/n)(\bar{F}_n^{-1}(i/n)-\frac{i}{n})K_h(i/n-u) + O_P(n^{-1}).
    \end{eqnarray*}
The rate of the remainder term follows from boundedness of $q'$  in a neighborhood of $u$ and the upper bound
    \begin{eqnarray*}
        && d \sup_{i=1,\dots,n}|\bar{F}_n^{-1}(\frac{i}{n})-\frac{i}{n}|^2 \frac{1}{n}\sum_{i=1}^n K_h(i/n-u) \\
        &\leq& d\left(\sup_{1\leq i \leq n-1} |\bar{F}_n^{-1}(\frac{i}{n})-\frac{i}{n}|^2 + |\bar{F}_n^{-1}(1)-1|^2\right)\left(\int_{-u/h}^{(1-u)/h}K(u)du + O\left(\frac{1}{nh}\right)\right)  
    \end{eqnarray*}
    for some constant $d$. To obtain the rate $O_P(1/n)$ note that $\{\frac{1}{n},\dots,\frac{n-1}{n}\}\subseteq [\frac{1}{n+1},\frac{n}{n+1}]$ to  apply \eqref{Gleichung für Taylor (glm Konv von emp Quantilprozess} together with  
      $\bar{F}_n^{-1}(1)=U_{(n)}= 1+  O_P(n^{-1})$ as in  example 1.7.9 of \cite{leadbetter2012extremes}. 
        Then with (\ref{CsörgöRevesz}) by \cite{csorgHo1978strong} we obtain 
    \begin{eqnarray*}
        \hat D_n(u)&=& \frac{1}{n}\sum_{i=1}^n r_p\left(\frac{i/n-u}{h}\right)q(i/n)K_h(i/n-u)(\frac{i}{n}-\bar{F}_n(\frac{i}{n})) + O_P\left(\frac{\log(n)^{1/2} \log\log(n)^{1/4}}{n^{3/4}}\right) . 
\end{eqnarray*}  
      As seen in the proof of Lemma \ref{Lemma: Lemma 3} in (\ref{C_n-entwicklung}) we have 
    \begin{eqnarray*}
       \hat C_n(u) &=&  \frac{1}{n}\sum_{j=1}^n\int_0^1q(t)r_p\left(\frac{t-u}{h}\right)(t-\mathds{1}_{U_j\leq t})K_h(t-u)dt + O_P\left(\frac{\log(n)^{1/2} \log\log(n)^{1/4}}{n^{3/4}}\right) .
    \end{eqnarray*}
    To obtain the assertion of the lemma we define $\hat A_{ {2,n}}(u)$ as the sum of all $O_P$-remainder term considered above, and
  \begin{eqnarray*}
  \hat A_{ {1,n}}(u) &=& \frac{1}{n}\sum_{i=1}^n r_p\left(\frac{i/n-u}{h}\right)q(i/n)K_h(i/n-u)(\frac{i}{n}-\bar{F}_n(\frac{i}{n})) \\
  &&{}-\frac{1}{n}\sum_{j=1}^n\int_0^1q(t)r_p\left(\frac{t-u}{h}\right)(t-\mathds{1}_{U_j\leq t})K_h(t-u)dt
  \end{eqnarray*}   
    which is centered with covariance matrix 
    \begin{eqnarray*}
        &&Cov(\hat{A}_{1,n}(u)) = E[\hat{A}_{1,n}(u)(\hat{A}_{1,n}(u))^\top ]  \\
  &=& \frac{1}{n}\biggl( \frac{1}{n^2}\sum_{i=1}^n\sum_{j=1}^n\left(\frac{i\wedge 
        j}{n} - \frac{ij}{n^2}\right) q(i/n)q(j/n)r_p\left(\frac{i/n-u}{h}\right)r_p\left(\frac{j/n-u}{h}\right)^\top \\
        &&{}\qquad\qquad\times K_h(i/n-u)K_h(j/n-u) \\
        &&{}-2\int_0^1 \frac{1}{n}\sum_{i=1}^n q(i/n)q(t)r_p\left(\frac{i/n-u}{h}\right)r_p\left(\frac{t-u}{h}\right)K_h(i/n-u)K_h(t-u)\\
        &&{}\qquad\qquad\times \left((\frac{i}{n}\wedge t) - \frac{it}{n}\right)dt \\
        &&{}+ \iint_{[0,1]^2} q(t)q(s)K_h(t-u)K_h(s-u)r_p\left(\frac{t-u}{h}\right)r_p\left(\frac{s-u}{h}\right)((t\wedge s)-ts) dtds \biggr) \\
        &=& \frac{1}{n}\left(\Tilde{A}-2\Tilde{B}+\Tilde{C}\right).
    \end{eqnarray*}
    By Riemann sum approximations one obtains $\Tilde{A} = \Tilde{C} + O((nh)^{-1})$ and $\Tilde{B}= \Tilde{C} + O((nh)^{-1})$, and thus the remainder terms give the asserted rate  $Cov(\hat A_{ {1,n}}(u)) = O\left(\frac{1}{n^2h}\right)$.
\end{proof}

With these four lemmata we are now able to proof Theorem \ref{Satz: Konvergenz der Schätzer}.

\begin{proof}[Proof of Theorem \ref{Satz: Konvergenz der Schätzer}]

    We start with the case $1\leq v\leq p$.  By definition (\ref{Einführung: Gleichung: Defintion Schätzer}), Lemma  \ref{Lemma: Lemma 1} and (\ref{termA})--(\ref{termC}) we obtain 
    \begin{eqnarray*}
        && {\frac{\sqrt{nh^{2v-1}}}{q(u)}}\left(\hat{Q}_{ {p}}^{(v)}(u)-Q^{(v)}(u)-h^{p+1-v}\mathcal{B}_{p,v}(u)\right) \\
              &=&   {\frac{\sqrt{nh^{2v-1}}}{q(u)}}\Big(e_v^\top v!H_{ {p}}^{-1}(S_{ {p,h,u}}+ o(1))^{-1}\left(\hat{A}_{ {1,n}}(u) + \hat{A}_{ {2,n}}(u) + \hat B_n(u) + \hat C_n(u)\right) \\
              &&{}- h^{p+1-v}\mathcal{B}_{p,v}(u)\Big)\\
              &=&   {\frac{\sqrt{nh^{2v-1}}}{q(u)}}\left(e_v^\top v!H_{ {p}}^{-1}(S_{ {p,h,u}}+ o(1))^{-1}\left( \hat B_n(u) + \hat C_n(u)\right) - h^{p+1-v}\mathcal{B}_{p,v}(u)\right) +o_P(1).
    \end{eqnarray*}
     {In the last step we also applied}  $\sqrt{nh^{2v-1}}e_v^\top H_{ {p}}^{-1}=\sqrt{nh^{-1}} $, Lemma \ref{Lemma: Lemma 4} and assumption 3. 
By Lemma \ref{Lemma: Lemma 2} we get 
    \begin{eqnarray*}
         {\frac{\sqrt{nh^{2v-1}}}{q(u)}}\left(e_v^\top v!H_p^{-1}S_{ {p,h,u}}^{-1}\hat B_n(u) - h^{p+1-v}\mathcal{B}_{p,v}(u)\right) = 
        o\left(\sqrt{nh^{2p+1}}\right)
=o(1)    \end{eqnarray*}
    by assumption 3 and because $S_{ {p,h,u}}$ and $c_{ {p,h,u}}$ for $n\to\infty$ converge to $S_p$ and $c_p$, respectively.  {By Lemma \ref{Lemma: Lemma 3} one obtains the asymptotic normal distribution of $\hat C_n(u)$ and this yields that
        \begin{eqnarray*}
       {\frac{\sqrt{nh^{2v-1}}}{q(u)}}e_v^\top v!H^{-1}S_{ {p,h,u}}^{-1}\hat C_n(u) =  {\frac{1}{q(u)}}v!e_v^\top \sqrt{n}N_{h,u}S_{p,u}^{-1}\hat C_n(u) 
    \end{eqnarray*}
    converges to a centred normal distribution with asymptotic variance 
\begin{eqnarray*}
         \frac{(v!)^2}{q(u)^2}e_v^\top \Sigma_{p,u}e_v = (v!)^2e_v^\top S_{p}^{-1}\Gamma_{p}S_{p}^{-1}e_v = \mathcal{V}_{p,v}
    \end{eqnarray*}
     for $1 \leq v \leq p$ with $\mathcal{V}_{p,v}$ from Theorem \ref{Satz: Konvergenz der Schätzer} as the entry of $e_v^\top \Sigma_{p,u}e_v$ is not in the first column or row and therefore is given by the expression of $S_p^{-1}\Gamma_{p}S_p^{-1}$ for $v\neq 0$. Further note that $\sqrt{nh^{2v-1}}e_v^\top H_{ {p}}^{-1}=\sqrt{nh^{-1}} = \sqrt{n}e_v^{\top}N_{h,u}$.
    For $v=0$ and interior $u$ we get with the same arguments as above
    \begin{eqnarray*}
        &&\frac{\sqrt{n}}{q(u)(u-u^2)^{1/2}}\left(\hat{Q}_{ {p}}^{(0)}(u)-Q^{(0)}(u)        \right) \\
              &=&  \frac{\sqrt{n}}{q(u)(u-u^2)^{1/2}}\left(e_0^\top H_{ {p}}^{-1}S_{ {p,h,u}}^{-1}\hat C_n(u)\right) +o_P(1).
    \end{eqnarray*}
    The bias term can be ignored in this case because $\sqrt{n}h^{p+1}=o(1)$ by assumption \ref{Annahme: Konvergenzen der Bandbreite}.
    For interior $u$ we have that $\sqrt{n}e_0^\top H_{ {p}}^{-1} = \sqrt{n} = e_0^\top N_{h,u}$, so with the same argument as before we get with Lemma \ref{Lemma: Lemma 3} the asymptotic standard normality as 
    \begin{eqnarray*}
         \frac{1}{q(u)^2(u-u^2)}e_0^\top \Sigma_{p,u}e_0 = \frac{q(u)^2(u-u^2)}{q(u)^2(u-u^2)} =1.
    \end{eqnarray*}
    Similar for $v=0$ and upper boundary $u$ we have
    \begin{eqnarray*}
        &&\sqrt{\frac{n}{h}}\frac{1}{q(u)}\left(\hat{Q}_{ {p}}^{(0)}(u)-Q^{(0)}(u)-h^{p+1-v}\mathcal{B}_{p,0}(u)\right) \\
              &=& \sqrt{\frac{n}{h}}\frac{1}{q(u)}\left(e_0^\top H_{ {p}}^{-1}S_{ {p,h,u}}^{-1}\hat C_n(u)\right) +o_P(1)
    \end{eqnarray*}
    for boundary $u$ with $\sqrt{\frac{n}{h}}e_0^\top H_{ {p}}^{-1} = \sqrt{\frac{n}{h}} = \sqrt{n}e_0^\top N_{h,u}$. So again we get the asymptotic normality with Lemma \ref{Lemma: Lemma 3} and using that  $q(u) \rightarrow q(1-)$ to get with Slutsky's lemma the asymptotic variance  
    \begin{eqnarray*}
         \frac{1}{q(1-)^2}e_0^\top \Sigma_{p,u}e_0 &=& \frac{q(1-)^2}{q(1-)^2}e_0^\top \left(ce_0e_0^\top + S_p^{-1}\Gamma_pS_p^{-1} - (e_1e_0^\top + e_0e_1^\top) \right) e_0\\
         &=& c + e_0^\top S_p^{-1}\Gamma_pS_p^{-1}e_0 = \mathcal{V}_{p,0}.
    \end{eqnarray*}
    The lower boundary case works the same.
    }
\end{proof}
\begin{remark}
    \label{Bemerkung: Multivariate statement}
     {As we show a multivariate statement in Lemma \ref{Lemma: Lemma 3}, therefore with slight modifications in the proof we can also show asymptotic normality of the form
    \begin{eqnarray*}
		\frac{\sqrt{n}}{q(u)}N_n(\hat{\mathcal{Q}}_p(u)-\mathcal{Q}_p(u)-\mathcal{B}_n(u)) \stackrel{d}{\longrightarrow} \mathcal{N}_{p+1}(0,[p]!V_{p,u}[p]!),
	\end{eqnarray*}
    with $	\mathcal{Q}_p(u) = (Q^{(v)}(u))_{v=0,\dots,p}$, 
	$	\hat{\mathcal{Q}}_p(u) = (\hat Q_p^{(v)}(u))_{v=0,\dots,p}$. Here $N_n$, a matrix that collects the respective convergence rates and the covariance matrix $V_{p,u}$, differ in the interior and boundary case.
    }
\end{remark}

\medskip

Now we consider the unbounded case. 

\begin{proof}[Proof of Theorem \ref{Satz: Konsistenz im unbegrenzten Randfall, modifizierte Annahmen}]
    Using (\ref{Einführung: Gleichung: Defintion Schätzer}) and (\ref{hat-beta}) we need to show that 
    \begin{eqnarray*}
    \frac{\hat Q_p^{(v)}(u)-Q^{(v)}(u)}{Q^{(v)}(u)} &=&   e_v^\top v!H_{ {p}}^{-1}\left(\frac{1}{n}D_{ {p},h,u}^\top K_{h,u}D_{ {p},h,u}\right)^{-1}\frac{1}{nQ^{(v)}(u)}D_{ {p},h,u}^\top K_{h,u}\left(X_{(\cdot)}-D_{ {p},u}\beta_p(u)\right) \stackrel{P}{\longrightarrow}0.
    \end{eqnarray*}
    For this we consider the Euclidean norm and apply Lemma 
     \ref{Lemma: Lemma 1} to see that  $\|(\frac{1}{n}D_{ {p},h,u}^\top K_{h,u}D_{ {p},h,u})^{-1}\|$  {has the dominating term} $\|S_{p, {h},u}^{-1}\|$. One can show that $\sup_{u\in[0,1]}\|S_{p, {h},u}^{-1}\| $ is bounded  {independent of $h$}. Further, the term $e_v^\top H_{ {p}}^{-1}$ gives a factor of $h^{-v}$. 
 Now consider
 \begin{eqnarray*}
        \frac{1}{n}D_{ {p},h,u}^\top K_{h,u}(X_{(\cdot)}-D_{ {p},u}\beta_p(u)) &=& \frac{1}{n}\sum_{i=1}^n  r_p\left(\frac{i/n-u}{h}\right)(Q_n(i/n)-Q(i/n))K_h(i/n-u)\\
        &+& \frac{1}{n}\sum_{i=1}^n r_p\left(\frac{i/n-u}{h}\right)(Q(i/n)-r_p(i/n-u)^\top \beta_p(u))K_h(i/n-u)\\
        &=:& A_n(u) + B_n(u).
    \end{eqnarray*}
    By bounding $K$ by $C_K$ and using the support of $K$ we can upper bound
    \begin{eqnarray*}
		|A_n(u)| &\leq& C_K\left(2+\frac{1}{nh}\right)\sup_{t\in I_n(u)}|Q_n(t)-Q(t)|\\
		&\leq& 
        3C_K\sup_{t \in I_n(u)}|q(t)|n^{-1/2}\left(\sup_{t \in I_n(u)} |\rho_n(t)-u_n(t)| + \sup_{t\in I_n(u)} |u_n(t)|\right).
	\end{eqnarray*}
     {Here $I_n(u)$ is defined in (\ref{Inu}). Further} $\rho_n$ is the standardized quantile process and $u_n$ is the uniform quantile process,
\begin{eqnarray*}
    \rho_n(t) = n^{1/2}\frac{Q_n(t)-Q(t)}{q(t)}\,\,,\,\, u_n(t) = n^{1/2}( {\bar{F}_n(t)}-t),
\end{eqnarray*}
where  {$\bar{F}_n$} is the empirical quantile function based on the uniformly distributed $U_i = F(X_i)$, $i=1,\dots,n$.
    By  {assumption \ref{Annahme für unbounded support} we have $i_U^n(u) < n$, and} therefore $I_n(u) \subseteq [\frac{1}{n+1},\frac{n}{n+1}]$. Consequently by \cite{csorgHo1985asymptotic}, we have that $\sup_{t\in  {I_n(u)}}|u_n(t)| = O_P(1)$. Also by Theorem 3 of \cite{csorgHo1978strong} it holds that $\sup_{t\in  {I_n(u)}} |\rho_n(t)-u_n(t)| = O_P(1)$. With our  {assumption \ref{Annahme für unbounded support}} on the supremum of $q$ it follows that
    \begin{eqnarray*}
        \Big\vert\frac{A_n(u)}{Q^{(v)}(u)}\Big\vert &\leq& 3C_K \sup_{t \in I_n(u)}|q(t)| n^{-1/2} O_P(1) = 3C_K o(h^vn^{1/2})n^{-1/2}O_P(1) = o_P(h^v).
    \end{eqnarray*}
    Similarly we get with the typical Taylor-approximation argument that
    \begin{eqnarray*}
		\Big\vert\frac{B_n(u)}{Q^{(v)}(u)}\Big\vert &\leq& \frac{3C_K}{(p+1)!}\frac{h^{p+1}\sup_{t\in I_n(u)}|Q^{(p+1)}(t)|}{|Q^{(v)}(u)|} = o(h^v).
	\end{eqnarray*}
    Together we obtain
    \begin{eqnarray*}
        \frac{1}{Q^{(v)}(u)nh^v}D_{ {p},h,u}^\top K_{h,u}\left(X_{(\cdot)}-D_{ {p},u}\beta_p(u)\right) = o_P(1)
    \end{eqnarray*}
    which shows the consistency.
\end{proof}

\end{appendix}


\bibliography{bibliography-neu}
\bibliographystyle{apalike}


\end{document}